\subjclass{Primary: 37D20, 37D30; Secondary: 37C10, 37C50}
\keywords{average shadowing, limit shadowing, hyperbolicity,
robusty property}
\newcommand{\R}{\mathbb{R}}
\DeclareMathAlphabet{\mathpzc}{OT1}{pzc}{m}{it}
\newcommand{\Mundo}{\mathfrak{X}^{1}(M)}
\newcommand{\Cmundo}{\mathfrak{X}_\mu^1(M)}
\newcounter{main}
\newtheorem{theorem}{Theorem}
\newtheorem*{theorema}{Theorem A}
\newtheorem*{theoremaa}{Theorem A$^\prime$}
\newtheorem*{theoremb}{Theorem B}
\newtheorem*{theorembb}{Theorem B$^\prime$}
\newtheorem*{theoremc}{Theorem C}
\newtheorem{proposition}[theorem]{Proposition}
\newtheorem{lemma}[theorem]{Lemma}
\newtheorem{corollary}[theorem]{Corollary}
\newcommand{\blanksquare}{\,\,\,$\sqcup\!\!\!\!\sqcap$}
\newcounter{example}
{{\stepcounter{example}}{\flushleft {\bf Example \arabic{example}:}}}%
{\par}
\DeclareMathOperator{\Crit}{Crit}
\title[Conservative flows with various types of  shadowing]
{Conservative flows with  various types of  shadowing}
\author[M. Bessa]{M\'{a}rio Bessa}
\address{Universidade da Beira Interior, Rua Marqu\^es d'\'Avila e Bolama,
  6201-001 Covilh\~a,
Portugal}
\email{bessa@ubi.pt}
\author[R. Ribeiro]{Raquel Ribeiro}
\address{Instituto de Matem\'atica, Universidade Federal do Rio de
Janeiro, P.O. Box 68530, Rio de Janeiro 21945-970, Brazil}
\email{ribeiro@impa.br}
\begin{document}

\begin{abstract}
In the present paper we study  the $C^1$-robustness of the three properties: average shadowing, asymptotic average shadowing and
limit shadowing within two classes of conservative flows: the
incompressible and the Hamiltonian ones.  We obtain that the first two properties guarantee
dominated splitting (or partial hyperbolicity) on the whole manifold, and the third
one implies that the flow is Anosov.
\end{abstract}

\maketitle



\section{Introduction: basic definitions and statement of the results}

It is known since long time ago that nonlinear systems behave, in general, in a quite complicated fashion. One of the most fundamental example of that was given by Anosov when studying the geodesic flow associated to metrics on manifolds of negative curvature (\cite{A}). Anosov obtained a striking geometric-dynamical behavior, now called \emph{uniform hyperbolicity}, of those systems in particular a global form of uniform hyperbolicity (a.k.a. Anosov flows). The core characteristic displayed by uniform hyperbolicity is, in brief terms, that on some invariant directions by the tangent flow we observe uniform contraction or expansion along orbits, and these rates are uniform. In the 1960's the hyperbolicity turned out to be the main ingredient which trigger the construction of a very rich theory of a wide class of dynamical systems (see e.g. \cite{S1,KH}). It allows us to obtain a fruitful ge\-o\-me\-tric theory (stable/unstable manifolds), a stability theory (in rough terms that hyperbolicity is tantamount to structural stability), a statistical theory (smooth ergodic theory) and a numerical theory (shadowing and expansiveness) are some examples of powerful applications of the uniform hyperbolicity concept. However, from an early age one began to understand that the uniform hyperbolicity was far from covering all types of dynamical systems and naturally other more relaxed definitions began to emerge (nonuniform hyperbolicity, partial hyperbolicity and dominated splitting, see e.g. ~\cite{BDV}). As mentioned above, the hyperbolicity was found to contain very interesting numerical properties. Actually, the hyperbolic systems display the \emph{shadowing property}: meaning that quasi-orbits, that is, almost orbits affected with a certain error, were shaded by true orbits of the original system. This amazing property, which is not present in partial hyperbolicity (see~\cite{BDT}), contained itself much of the typical rigidity of the hyperbolicity and its strong assumptions. Nonetheless, a much more surprising fact is that, under a certain stability hypothesis, the other way around turns out to be also true. To be more precise, if we assume that we have the robustness of the shadowing property, then the dynamical system is uniformly hyperbolic. In overall, some stability of a pointedly numerical property, allow us to obtain a geometric, dynamic and also topological feature. The next step then was to address the following question: \emph{is it possible to weaken the shadowing property and obtain the same conclusions?} If not, \emph{how far we can get in our findings?}
In the present paper we deal with three enfeebled branches of
shadowing: the average shadowing, the asymptotic average shadowing
and the limit shadowing (see \S\ref{SP} for full details). In
conclusion, we prove that the stability of these types of
shadowing for conservative flows imply (some) hyperbolicity. More specifically, the
stability of the first two  types of shadowing mentioned above
imply that the flow admits a dominated splitting in the whole
manifold, and the one of the third  shadowing guarantees that the
flow is of Anosov type. Theses results hold for incompressible flows and Hamiltonian
ones and in arbitrarily high dimension. See \S\ref{theend} for the  statements
of the main results of this work.

\begin{subsection}{Dissipative and incompressible flows setting}\label{CI}
Along this paper we consider vector fields $X:M\rightarrow{TM}$,
where $M$ is a $d$-dimensional ($d\geq 3$) connected and closed
$C^{\infty}$ Riemannian manifold $M$ and $TM$ its tangent bundle.
Given a vector field $X$ we have an associated flow $X^{t}$ which
is the infinitesimal generator of $X$ in a sense that
$\partial_t X^{t}|_{t=s}(p)=X(X^{s}(p))$. If the divergence of
$X$, defined by $\nabla \cdot X=\sum_{i=1}^d \frac{\partial
X_i}{\partial x_i}$, is zero we say that $X$ is divergence-free.
The flow $X^{t}$ has a tangent flow $DX^{t}_{p}$ which is the
solution of the non-autonomous linear variational equation
$\partial_t u(t)=DX_{X^{t}(p)}\cdot u(t)$. Moreover, due to Liouville's
formula, if $X$ is divergence-free, the associated flow $X^t$
preserves the volume-measure and for this reason we call it
\emph{incompressible}. If the vector field is not divergence-free
its flow is \emph{dissipative}. We denote by $\mathfrak{X}^{1}(M)$
the set of all dissipative
 $C^1$ vector fields  and by  $\mathfrak{X}^{1}_{\mu}(M)\subset
\mathfrak{X}^{1}(M)$  the set of all $C^{1}$ vector fields that
preserve the volume, or equivalently the set of all incompressible
flows. We assume that both $\mathfrak{X}^{1}(M)$ and
$\mathfrak{X}^{1}_{\mu}(M)$ are endowed with the $C^{1}$ Whitney
(or strong) vector field topology which turn these two vector
spaces completed, thus a Baire space.  We denote by $\mathcal{R}$
the set of regular points of $X$, that is, those points $x$ such
that $X(x)\not=\vec0$ and by Sing ($X$)$=M\setminus\mathcal{R}$
the set of singularities of $X$. Let us  denote by $\Crit(X)$ the
set of critical orbits of $X$, that is, the set formed by all
periodic orbits and all singularities of $X$.

The Riemannian structure on $M$ induces a norm $\left\|\cdot\right\|$ on the fibers $T_p M$, $\forall \:p\in M$. We will use the standard norm of a bounded linear map $L$ given by
\begin{equation}
\displaystyle\left\|L\right\|=\sup_{\left\|u\right\|=1}\left\|L(u)\right\|.\nonumber
\end{equation}

A metric on $M$ can be derived in the usual way by using the
exponential map or through the Moser volume-charts (cf. \cite{Mo})
in the case of volume manifolds, and it will be denoted by
$d(\cdot,\cdot)$. Hence, we define the open balls $B(x,r)$ of the
points $y\in M$ satisfying $d(x,y)<r$ by using those charts.

Dissipative flows appear often in models given by differential
equations in mathema\-tical physics, economics, biology, engineering
and many diverse areas. Incompressible flows arise naturally in
the fluid mechanics formalism and has long been one of the most
challenging research fields in ma\-the\-ma\-ti\-cal physics.
\end{subsection}

\begin{subsection}{The Hamiltonian flow formalism}
Let $(\emph{M},\omega)$ be a compact symplectic manifold, where
$\emph{M}$ is a $2d$-dimensional ($d\geq 2$), smooth and compact
Riemannian manifold endowed with a symplectic structure $\omega$,
that is, a skew-symmetric and nondegenerate 2-form on the tangent
bundle $T\emph{M}$. We notice that we use the same notation for
manifolds supporting Hamiltonian flows and also flows as in
 Subsection \ref{CI}, which we  hope will not be ambiguous.

We will be interested in the Hamiltonian dynamics of real-valued
$C^2$ functions on $\emph{M}$, constant on each connected
component of the boundary of $\emph{M}$, called
\emph{Hamiltonians}, whose set we denote by
$C^2(\emph{M},\mathbb{R})$.  For any Hamiltonian function
$H\colon\emph{M}\longrightarrow \mathbb{R}$ there is a
corres\-pon\-ding \textit{Hamiltonian vector field}
$X_H\colon\emph{M}\longrightarrow T\emph{M}$,  tangent to the
boundary of $\emph{M}$, and determined by the equality
\begin{displaymath}
\nabla_pH(u)=\omega(X_H(p),u), \;\forall u\in T_p\emph{M},
\end{displaymath}
where $p\in \emph{M}$.

Observe that $H$ is $C^2$ if and only if $X_H$ is $C^{1}$. Here we
consider the space of the Hamiltonian vector fields endowed with
the $C^1$ topology, and for that we consider
$C^2(\emph{M},\mathbb{R})$ equipped with the $C^2$ topology.

The Hamiltonian vector field $X_H$ generates the
\textit{Hamiltonian flow} $X_H^t$, a smooth 1-parameter group of
symplectomorphisms on $\emph{M}$ sa\-tis\-fying
$\partial_t X_H^t=X_H (X_H^t)$ and $X_H^0=id$. We also consider
the \emph{tangent flow} $D_pX_H^t\colon T_p\emph{M}\longrightarrow
T_{X_H^t(p)}\emph{M}$, for $p\in \emph{M}$, that satisfies the
linearized differential equality
$\partial_t D_pX_H^t=(D_{X_H^t(p)}X_H)\cdot D_pX_H^t$, where
$D_pX_H\colon T_p\emph{M}\longrightarrow T_p\emph{M}$.

Since $\omega$ is non-degenerate, given $p \in \emph{M}$,
$\nabla_pH=0$ is equivalent to $X_H(p)=0$, and we say that $p$ is
 a \textit{singularity} of $X_H$. A point is said to be
\textit{regular} if it is not a singularity. We denote by
$\mathcal{R}$ the set of regular points of $H$, by Sing ($X_H$)
the set of singularities of $X_H$ and by Crit ($H$) the set of
critical orbits of $H$.

By the theorem of Liouville (\cite[Proposition 3.3.4]{AbM}), the
symplectic manifold $(M,\omega)$ is also a volume manifold, that
is, the $2d$-form
$\omega^d=\omega\wedge\overset{d}{...}\wedge\omega$ is a volume
form and induces a measure $\mu$ on $M$, which is called the
Lebesgue measure associated to $\omega^d$. Notice that the measure
$\mu$ on $M$ is invariant by the Hamiltonian flow.

Fixed a Hamiltonian $H\in C^2(\emph{M},\mathbb{R})$ any scalar
$e\in H(\emph{M})\subset \mathbb{R}$ is called an \textit{energy}
of $H$ and $H^{-1}(\left\{e\right\})=\left\{p\in \emph{M}:
H(p)=e\right\}$ is the corresponding \textit{energy level} set
which is $X_H^t$-invariant. An \textit{energy surface}
$\mathcal{E}_{H,e}$ is a connected component of
$H^{-1}(\left\{e\right\})$; we say that it is \emph{regular} if it
does not contain singularity points and in this case
$\mathcal{E}_{H,e}$ is a regular compact $(2d-1)$-manifold.
Moreover, $H$ is constant on each connected component
$\mathcal{E}_{H,e}$ of the boundary $\partial \emph{M}$.

A {\em Hamiltonian system} is a triple $(H,e, \mathcal{E}_{H,e})$, where $H$ is a Ha\-mil\-ton\-ian, $e$ is an energy and $\mathcal{E}_{H,e}$ is a regular connected component of $H^{-1}(\{e\})$.

Due to the compactness of $\emph{M}$, given a Hamiltonian function
$H$ and $e\in H(\emph{M})$ the e\-nergy level
$H^{-1}(\left\{e\right\})$ is the union of a finite number of
disjoint compact connected components, separated by a positive
distance.  Given $e \in H(\emph{M})$,  the pair $(H,e) \subset
C^2(\emph{M},\mathbb{R}) \times \mathbb{R}$ is called a
\textit{Hamiltonian level}; if we fix $\mathcal{E}_{H,e}$ and a
small neighbourhood $\mathcal{W}$ of $\mathcal{E}_{H,e}$ there
exist a small neighbourhood $\mathcal{U}$ of $H$ and $\delta>0$
such that for all $\tilde{H} \in \mathcal{U}$ and $\tilde{e} \in
]e-\delta,e+\delta[$ one has that
$\tilde{H}^{-1}(\{\tilde{e}\})\cap
\mathcal{W}=\mathcal{E}_{\tilde{H},\tilde{e}}$. We call
$\mathcal{E}_{\tilde{H},\tilde{e}}$ the \emph{analytic
continuation} of $\mathcal{E}_{H,e}$.

Using the Darboux charts (cf. \cite{MZ}) we define a metric on
$\emph{M}$ which we also denote by $d(\cdot,\cdot)$. Let $B(x,r)$
stand for the open balls centered in $x$ and with radius $r$ by
using Darboux's charts.

The Hamiltonian formalism appears in various branches of pure and applied mathe\-matics and, due to its ubiquity, it is completely undeniable the importance and impact of this fundamental concept in science today. We refer the book \cite{MZ} for a full detailed exposition about Hamiltonian formalism.

\end{subsection}


\begin{subsection}{Properties of the Shadowing}\label{SP}

The concept of shadowing in dynamical systems has both
applications in numerical theoretical analysis and also to
structural stability and hyperbolicity. In rough terms shadowing
is supported in the idea of estimating differences between exact
and approximate solutions along orbits and to understand the
influence of the errors that we commit and allow on each iterate.
We usually ask if it is possible to obtain shadowing of ``almost"
trajectories in a given dynamical system by exact ones.

It is interesting to take a more general context where the errors of the
``almost" trajectories can be large, however, on \emph{average}
they remain small. This concept, much more relaxed than shadowing,
and called average shadowing was introduced by Blank ~\cite{BA}
 and is one of the main subjects of this work.

A sequence $(x_i,t_i)_{i=0}^m$, with $m\in\mathbb{Z}$, is called a
$\delta$-\emph{pseudo-orbit} of a given vector field $X$
(dissipative, incompressible or Hamiltonian) if for every $0\leq
i \leq m-1$ we have $t_i\geq 1$ and
$$d(X^{t_i}(x_i),x_{i+1})<\delta.$$

A sequence $(x_i,t_i)_{i\in\mathbb{Z}}$ is a
$\delta$-\emph{average-pseudo-orbit of $X$},  if $t_i\geq 1$ for
every $i\in\mathbb{Z}$ and there is a number $N$ such that for any $n\geq
N$ and $k\in\mathbb{Z}$ we have
$$\frac{1}{n}\sum_{k=1}^{n} d(X^{t_{i+k}}(x_{i+k}),x_{i+k+1})<\delta.$$

A sequence $(x_i,t_i)_{i\in\mathbb{Z}}$ is \emph{positively
$\epsilon$-shadowed in average} by the orbit of $X$ through a
point $x$, if there exists an orientation preserving homeomorphism
$h \colon \mathbb{R}\rightarrow \mathbb{R}$ with $h(0)=0$ such
that
\begin{equation}\label{br02}\underset{n\rightarrow
\infty}{\limsup}\frac{1}{n}\sum_{i=1}^n \int_{s_i}^{s_{i+1}}
d(X^{h(t)}(x), X^{t-s_i}(x_i))dt<\epsilon,
\end{equation}

\noindent where $s_0=0$, $s_n=\sum_{i=0}^{n-1}t_i$ and
$s_{-n}=\sum_{i=-n}^{-1}t_i$ for $n\in \mathbb{N}$. Analogously,
we define \emph{negatively $\epsilon$-shadowed in average}.

We say that $X$ has the \emph{average shadowing property} if for
any $\epsilon>0$, there is a $\delta>0$ such that any
$\delta$-average-pseudo-orbit,  $(x_i,t_i)_{i\in\mathbb{Z}}$,  of
$X$ can be positively and negatively $\epsilon$-shadowed in
average by some orbit of $X$.


In the other hand, the authors in  \cite{ENS} posed the notion of
the limit-shadowing property. From the numerical point of view
this property on a dynamical system $X$ means that if we apply a
numerical method of approximation to $X$ with ``improving
accuracy" so that one step errors tend to zero as time goes to
infinity then the numerically obtained trajectories tend to real
ones. Such situations arise, for example, when one is not so
interested on the initial (transient) behavior of trajectories but
wants to reach areas where ``interesting things" happen (e.g.
attractors) and then improve accuracy. To be more precise, we say
that a sequence $(x_i,t_i)_{i\in \mathbb{Z}}$ is a
\textit{limit-pseudo orbit} of $X$ (dissipative, incompressible or
Hamiltonian) if $t_i\geq1$ for every $i\in \mathbb{Z}$ and
\begin{equation*}\label{quesaco10}
    \lim_{|i|\rightarrow\infty}d(X^{t_i}(x_i),x_{i+1})=0.
\end{equation*}
A limit-pseudo orbit $(x_i,t_i)_{i\in \mathbb{Z}}$ of $X$ is
\textit{positively shadowed in limit} by an orbit of $X$ through a
point $x$, if there is an orientation preserving homeomorphism
${h}: \R \rightarrow \R$ with $h(0)=0$  such
that
\begin{equation}\label{quesaco11}
    \lim_{i\to\infty}\int_{s_i}^{s_{i+1}}d(X^{h(t)}(x),
    X^{t-{s_i}}(x_i))\,dt=0.
\end{equation}
Analogously, as we did before, we define when a limit-pseudo orbit
is said to be \textit{negatively shadowed in limit} by an orbit.

We say that $X$ has the \emph{limit shadowing property} if any
limit pseudo-orbit, $(x_i,t_i)_{i\in\mathbb{Z}}$, of $X$ can be
positively and negatively shadowed in limit by some orbit of $X$.


Finally, Gu~\cite{GSX2} introduced the notion of the asymptotic
average shadowing property for flows which is particularly well
adapted to random dynamical systems. A sequence $(x_i,t_i)_{i\in
\mathbb{Z}}$ is an \textit{asymptotic average-pseudo orbit} of $X$
(dissipative, incompressible or Hamiltonian) if $t_i\geq 1$ for
every $i\in \mathbb{Z}$ and
\begin{equation*}
    \lim_{n\to \infty}\frac{1}{n}\sum_{i=-n}^nd(X^{t_i}(x_i),x_{i+1})=0.
\end{equation*}
A sequence  $(x_i,t_i)_{i\in \mathbb{Z}}$ is \textit{positively
asymptotically shadowed in average} by an orbit of $X$ through
$x$, if there exists an orientation preserving homeomorphism ${h}:
\R \rightarrow \R$ such that
\begin{equation} \label{saco1000}
    \lim_{n\to\infty}\frac{1}{n}\sum_{i=0}^n\int_{s_i}^{s_{i+1}}d(X^{h(t)}(x), X_{t-s_i}(x_i))dt=0.
\end{equation}

Again, where $s_0=0$ and $s_n=\sum_{i=0}^{n-1}t_i$, $n\in
\mathbb{N}$. Similarly  an asymptotic average-pseudo orbit is
\textit{negatively asymptotically shadowed in average}.

We say that $X$ has the \emph{asymptotic average shadowing
property} if for any $\epsilon>0$, any asymptotic
average-pseudo-orbit,  $(x_i,t_i)_{i\in\mathbb{Z}}$, of $X$ can be
positively and negatively asymptotically shadowed  in average by
some orbit of $X$.


Note that the definitions of shadowing  above allows the presence
of a re\-pa\-ra\-metrization of the trajectory. In this case we
have the difficulty of analyzing the existence (or not) of an
orbit that shadows the pseudo orbit, because we can not control
how fast the orbit moves. In fact, the reparametrization allows,
in a short time, an orbit possibly distant pseudo orbit, approach
of it so that the limits (\ref{br02}), (\ref{quesaco11}) and
(\ref{saco1000}) are satisfied. This does not occur when the
function $h$ is equal to the identity, for example, as in
\cite{Ba}.

We observe also that the above shadowing concepts  are not
equivalent. Recall that Morse-Smale vector fields,  are a class
within the dissipative flows, admiting sinks and sources. It is known that the average
shadowing, the asymptotic average shadowing, and the limit
shadowing properties each imply that there are neither sinks nor
sources in the system. Thus, a Morse-Smale vector field is an
example of a vector field which has the shadowing property but do
not have average shadowing property, or asymptotic average
shadowing property or limit shadowing property. See \cite{Niu} for
more examples. Examples of systems which have the asymptotic
average shadowing property or the limit shadowing property, but do
not have the shadowing property can found in \cite{GSX3} and
\cite{P}, respectively.

Finally, we define the set of \emph{$C^1$-stably average shadowing
flows}:

\begin{itemize}
\item If  $X\in \mathfrak{X}^1(M)$ we say that $X^t$ is a
\emph{$C^1$-stably average shadowable flow} if any $Y\in
\mathfrak{X}^1(M)$ sufficiently $C^1$-close to $X$  has the
average shadowing property;

\item If  $X\in \mathfrak{X}^1_\mu(M)$ we say that $X^t$ is a
\emph{$C^1$-stably average shadowable incompressible flow }if any
$Y\in \mathfrak{X}^1_\mu(M)$ sufficiently $C^1$-close to $X$ has
the average shadowing property and

\item The Hamiltonian system $(H,e, \mathcal{E}_{H,e})$ is {\em
stably average shadowable} if there exists a neighbourhood
$\mathcal{V}$ of $(H,e, \mathcal{E}_{H,e})$ such that any
$(\tilde{H},\tilde{e}, \mathcal{E}_{\tilde{H},\tilde{e}}) \in
\mathcal{V}$ has the average shadowing property.
\end{itemize}
Analogously we define the sets of \emph{$C^1$-stably asymptotic
average shadowable flows} and \emph{$C^1$-stably limit shadowable
flows}.

Here we will include in our studies flows that in general can have
singularities. \emph{Posteriori} we will derive that the three
properties of shadowing in fact imply the absence of
singularities. We refere  \cite{ARR}  and  \cite{K}
 for
a discussion of types of shadowing in the Lorenz flow containing
one singularity.
\end{subsection}


\begin{subsection}{Hyperbolicity and statement of the results}\label{theend}
Let $\Lambda \subseteq \mathcal{R}$ be an $X^t$-invariant set. We
say that $\Lambda$ is \emph{hyperbolic} with respect to the vector
field $X\in\mathfrak{X}^1(M)$ if, there exists $\lambda\in(0,1)$
such that, for all $x\in \Lambda$, the tangent vector bundle over
$x$ splits into three $DX^t(x)$-invariant subbundles $T_x
\Lambda=E^u_x\oplus E_x^0\oplus E_x^s$, with
$\|DX^1(x)|_{E^s_x}\|\leq \lambda$ and
$\|DX^{-1}(x)|_{E_x^u}\|\leq \lambda$ and $E_x^0$ stands for the
one-dimensional flow direction. If $\Lambda=M$ the vector field
$X$ is called \emph{Anosov}. We observe that there are plenty
Anosov flows which are not incompressible. Despite the fact that
all incompressible Anosov flows are transitive, there exists
dissipative ones which are not (see ~\cite{FW}).

Given $x\in \mathcal{R}$ we consider its normal bundle $N_{x}=X(x)^{\perp}\subset T_{x}M$ and define the \emph{linear Poincar\'{e} flow} by $P_{X}^{t}(x):=\Pi_{X^{t}(x)}\circ DX^{t}_{x}$ where $\Pi_{X^{t}(x)}:T_{X^{t}(x)}M\rightarrow N_{X^{t}(x)}$ is the projection along the flow direction $E^0_x$. Let $\Lambda \subset \mathcal{R}$ be an $X^t$-invariant set and $N=N^{1}\oplus  N^{2}$ be a $P_{X}^{t}$-invariant splitting over $\Lambda$ with $N^1$ and $N^2$ one-dimensional. Fixed $\lambda\in(0,1)$ we say that this splitting is an $\lambda$-\emph{dominated splitting} for the linear Poincar\'{e} flow if for all $x\in \Lambda$  we have:
$$\|P_{X}^{1}(x)|_{N^{2}_{x}}\|  . \|P_{X}^{-1}(X^1(x))|_{N^{1}_{X^1(x)}}\|   \leq \lambda.$$

This definition is weaker than hyperbolicity where it is required that
\begin{equation}\label{lpf}
\|P_{X}^{1}(x)|_{N^{2}_{x}}\|\leq \lambda \text{  and also that  }  \|P_{X}^{-1}(X^1(x))|_{N^{1}_{X^1(x)}}\|  \leq \lambda.
\end{equation}
When $\Lambda$ is compact this definition is equivalent to the usual definition of hyperbolic flow (\cite[Proposition 1.1]{D}).

Let us recall that a periodic point $p$ of period $\pi$ is said to
be \emph{hyperbolic} if the linear Poincar\'e flow $P_X^\pi(p)$
has no norm one eigenvalues. We say that $p$ has \emph{trivial
real spectrum} if $P_X^\pi(p)$ has only real eigenvalues of equal
norm to one
  and there exists $0\leq k \leq n-1$
such that 1 has multiplicity $k$ and $-1$ has multiplicity $n-1-k$. Observe that, in the incompressible case, having trivial real spectrum is equivalent to the eigenvalues are equal to 1 or $-1$.

Within Hamiltonian flows we define $\mathcal{N}_x:=N_x\cap T_xH^{-1}(\left\{e\right\})$, where $T_xH^{-1}(\left\{e\right\})=Ker\: \nabla H_x$ is the tangent space to the energy level set. As a consequence we get that $\mathcal{N}_x$ is a ($2d-2$)-dimensional subbundle. The \textit{transversal linear Poincar\'{e} flow} associated to $H$ is given by
\begin{align}
\Phi_H^t(x): &\ \ \mathcal{N}_{x}\rightarrow \mathcal{N}_{X_H^t(x)}\nonumber\\
&\ \ v\mapsto \Pi_{X_H^t(x)}\circ D_x{X_H}^t(v),\nonumber
\end{align}
where $\Pi_{X_H^t(x)}: T_{X_H^t(x)}M\rightarrow \mathcal{N}_{X_H^t(x)}$ denotes the canonical orthogonal projection.

Analogously to was we did for the linear Poincar\'e flow in (\ref{lpf}) we define hyperbolicity and also dominated splitting for the transversal linear Poincar\'{e} flow $\Phi^t_H$. We say that a compact, $X_H^t$-invariant and regular set $\Lambda\subset M$ is \emph{partially hyperbolic} for the transversal linear Poincar\'{e} flow $\Phi^t_H$ if there exists a $\Phi_H^t$-invariant splitting $\mathcal{N}=\mathcal{N}^u\oplus \mathcal{N}^c \oplus \mathcal{N}^{s}$ over $\Lambda$ such that all the subbundles have constant dimension and at least two of them are non-trivial and  $\lambda\in(0,1)$ such that, $\mathcal{N}^u$ is $\lambda$-uniformly hyperbolic and expanding, $\mathcal{N}^s$ is $\lambda$-uniformly hyperbolic and contracting and
 $\mathcal{N}^u$ $\lambda$-dominates $\mathcal{N}^c$ and $\mathcal{N}^c$ $\lambda$-dominates $\mathcal{N}^s$. Along this paper, we consider hyperbolicity, partial hyperbolicity and dominated splitting defined in a set $\Lambda$ which is the whole energy level. It is quite interesting to observe that in Hamiltonians the existence of a do\-mi\-na\-ted splitting implies partial hyperbolicity (see \cite{BV}).

\vspace{0.1cm}

We begin by presenting  our results.
Our  main result in the context of  incompressible flows is a
generalization of the main  result in \cite{Ba} also for higher
dimensional flows.

\begin{theorema} If an incompressible flow $X^t$ is $C^1$-stably average shadowable, or  $C^1$-stably asymptotic average shadowable, then $X^t$ admits a dominated splitting on $M$.
\end{theorema}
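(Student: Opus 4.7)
The plan is to argue by contrapositive: assuming $X^t$ does not admit a dominated splitting for the linear Poincar\'e flow $P_X^t$ on $M$, I would construct $Y\in \mathfrak{X}^1_\mu(M)$ arbitrarily $C^1$-close to $X$ whose flow fails average shadowing (resp.\ asymptotic average shadowing). First, singularities must be eliminated: as the authors observe, each shadowing variant forbids sinks and sources, and an analogous argument in the volume-preserving setting forbids singularities altogether --- near a singularity one builds a $\delta$-average-pseudo-orbit oscillating between the singularity and a nearby regular orbit whose shadowing error in average stays bounded below by a definite constant. Hence one may assume $\mathcal{R}=M$.

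The core ingredient is a spectral rigidity statement: \emph{if $X^t$ is $C^1$-stably (asymptotic) average shadowable, then every closed orbit of every $Y$ in a small $C^1$-neighbourhood of $X$ has trivial real spectrum}, i.e.\ all eigenvalues of its transverse Poincar\'e return map equal $\pm 1$. Given a closed orbit $\gamma$ with a nonreal pair of eigenvalues, I would use the conservative Franks-type lemma for flows (Bessa--Rocha / Arbieto--Matheus) to realise, by an arbitrarily small $C^1$ divergence-free perturbation supported in a tubular neighbourhood of $\gamma$, a rotational block with irrational rotation number in the normal direction. Concatenating arcs traversing such rotational tubes, glued by small $\delta$-jumps, yields an average-pseudo-orbit whose candidates for shadowing are forced, even after the reparametrization $h$, to rotate at incompatible rates; the integrand in (\ref{br02}) then stays bounded away from zero along a positive-density sequence of indices, giving the desired failure. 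With jumps tuned to decay to zero the same construction serves for asymptotic average shadowing.

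With spectral rigidity in hand, I would invoke a conservative dichotomy of Bochi--Bonatti--Gourmelon--Vivier type for the linear Poincar\'e flow: either $P_X^t$ admits a dominated splitting over the closure of the set of closed orbits, or some periodic orbit of some arbitrarily $C^1$-close $Y\in \mathfrak{X}^1_\mu(M)$ has a nonreal pair of eigenvalues. The second alternative is excluded by the spectral rigidity step, so the first holds. Since Pugh--Robinson's closing lemma for divergence-free vector fields makes the periodic set $C^1$-generically dense in $M$, continuity of dominated splittings (together with the standard extension procedure from a dense set) promotes it to the whole manifold.

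The principal obstacle lies in the spectral rigidity step. One must produce, by a divergence-free perturbation, a genuine rotation on a normal tube while controlling the joining $\delta$-jumps so that the constructed sequence is genuinely an average- (or asymptotic average-) pseudo-orbit, and one must rule out shadowing candidates uniformly in the time-reparametrization $h$. Handling the reparametrization and the volume constraint simultaneously is the delicate point; everything else is a combination of standard ingredients from the generic $C^1$-conservative theory of flows.
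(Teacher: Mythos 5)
Your overall architecture (rule out singularities, prove a spectral statement about closed orbits, feed it into a Bonatti--Gourmelon--Vivier type dichotomy, then spread the dominated splitting using density of periodic orbits) matches the paper's, but the spectral step is stated backwards and the dichotomy is misquoted, and this breaks the argument. The conservative BGV dichotomy (Proposition~\ref{BGV3}) says that a long closed orbit either carries an $\ell$-dominated splitting for $P_X^t$ or can be $C^1$-perturbed so that the return map has \emph{all eigenvalues equal to $1$ and $-1$}; the bad alternative is the trivial real spectrum, not a nonreal pair of eigenvalues. Consequently, what must be proved is that a $C^1$-stably (asymptotic) average shadowable flow admits \emph{no} closed orbit with trivial real spectrum (this is Lemmas~\ref{main2} and~\ref{socorro500}), whereas you assert the exact opposite (``every closed orbit \dots has trivial real spectrum, i.e.\ all eigenvalues \dots equal $\pm1$''), a statement that is false --- it would forbid hyperbolic periodic orbits altogether --- and in any case feeds the wrong alternative of the dichotomy. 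Moreover, the mechanism you describe only excludes nonreal eigenvalues, and that does not yield domination: an orbit can have all eigenvalues real and still admit no dominated splitting, and then there is no rotation to exploit. The paper's mechanism is different and essential: once the spectrum is trivially real, a further perturbation and passage to the $2\pi$-iterate make the return map the \emph{identity} on a transverse neighbourhood (equation~(\ref{chato5000})); the alternating sequence~(\ref{rrbp}) between $w_1$ and $w_2=-w_1$ with blocks of length $2^{j}$ is then a $\delta$-average-pseudo-orbit that no single orbit (which is stationary transversally to the flow direction) can $\epsilon$-shadow in average, reparametrization notwithstanding.

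A secondary gap is the treatment of singularities. Your direct construction of a non-shadowable average-pseudo-orbit oscillating between a singularity and a nearby regular orbit is not substantiated: near a hyperbolic saddle genuine orbits can linger arbitrarily long, and the reparametrization $h$ gives a candidate shadowing orbit considerable freedom. The paper instead perturbs the singularity to a linear hyperbolic saddle (Lemma~\ref{linear}), derives an $\ell$-dominated splitting on $M\setminus \mathrm{Sing}(X)$ from the periodic-orbit analysis together with the generic density of topologically mixing flows and of periodic orbits Hausdorff-close to $M$, and then contradicts Proposition~\ref{Vivier}, which forbids such a splitting in the presence of a linear saddle (Theorem~\ref{sing}). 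You would need either to follow this route or to supply a genuine proof of your pseudo-orbit construction near the singularity.
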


\begin{theoremaa}
A $d$-dimensional incompressible flow  ($d \geq 3$)  which is
$C^1$-stably limit shadowable is a  transitive Anosov flow.
\end{theoremaa}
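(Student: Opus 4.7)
The plan is to show that $C^1$-stable limit shadowing forces $X^t$ to be uniformly hyperbolic on all of $M$ with $\operatorname{Sing}(X)=\emptyset$; transitivity then comes for free since, as recalled above, every incompressible Anosov flow on a compact manifold is transitive. I would organise the argument into three layers: produce a dominated splitting for the linear Poincaré flow on $\mathcal{R}$, upgrade this dominated splitting to a genuine hyperbolic splitting, and rule out singularities.

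First, I would mimic the proof of Theorem A, taking care that the pseudo-orbits constructed there can be realised as limit-pseudo-orbits rather than merely as asymptotic average-pseudo-orbits. Concretely, if $X$ admitted no dominated splitting over $\mathcal{R}$, a Mañé-type argument in the conservative setting would produce, after an arbitrarily small $C^1$-perturbation, an elliptic (or near-elliptic) periodic orbit $p$ of some nearby $Y\in\mathfrak{X}^1_\mu(M)$. A standard rotation construction then gives a pseudo-orbit near $p$ with jump errors $\varepsilon_i\to 0$; this is automatically a limit-pseudo-orbit, and any candidate shadow would require a rotation pattern incompatible with any orientation-preserving reparametrization $h$, contradicting (\ref{quesaco11}). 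This yields a $P_X^t$-invariant dominated splitting $N=N^1\oplus N^2$ on $\mathcal{R}$.

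Next, I would upgrade this splitting to hyperbolic. Pugh--Robinson's conservative closing lemma together with a conservative Franks lemma allow one to transfer any failure of uniform contraction on $N^1$ (or expansion on $N^2$) to a periodic orbit $\gamma$ of a nearby flow whose linear Poincaré return map has an eigenvalue of modulus one inside the corresponding block --- in the incompressible case, an orbit with trivial real spectrum in that block. I would then build a limit-pseudo-orbit alternating between $\gamma$ and a small shear $\gamma'$ of $\gamma$ along the neutral direction, inserting each jump into a shrinking time window so that $d(X^{t_i}(x_i),x_{i+1})\to 0$. Any orbit that limit-shadows this pseudo-orbit must, by the domination, remain close to either $\gamma$ or $\gamma'$ for long stretches, incurring an integrated $L^1$-error of size bounded below during the stretches where it tracks the wrong one of the two orbits; the hypothesis $t_i\geq 1$ combined with the orientation-preservation of $h$ forbids any time squeeze that could absorb this error. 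This contradicts $C^1$-stable limit shadowing, so every periodic orbit of every nearby flow is uniformly hyperbolic, and a Mañé-style propagation based on the dominated splitting then promotes $N^1$ and $N^2$ to uniformly contracting and expanding subbundles on $\mathcal{R}$.

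An entirely analogous pseudo-orbit construction in a neighbourhood of a (generically hyperbolic) singularity of a nearby flow produces a non-limit-shadowable pseudo-orbit, forcing $\operatorname{Sing}(X)=\emptyset$; hence $X^t$ is Anosov on $M$, and transitivity follows. The hard part will be the pseudo-orbit construction in the previous paragraph: the time-reparametrization freedom in (\ref{quesaco11}) is precisely what makes limit shadowing substantially more delicate than its discrete analogue, and one must design the limit-pseudo-orbits so that no orientation-preserving $h$ can absorb their $L^1$-integrated deviation from any true orbit. Everything else in the scheme is standard conservative $C^1$-perturbation technology; the real content lies in this incompatibility between neutral directions and limit shadowing under reparametrizations.
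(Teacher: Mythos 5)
Your high-level scheme (hyperbolic periodic data everywhere, no singularities, then Anosov plus transitivity) matches the paper's in outline, but the two steps you yourself flag as ``the hard part'' are exactly where the proposal breaks down, and the paper resolves them by a different mechanism that your sketch never supplies. The paper's engine is Lemma~\ref{rr30B}: limit shadowing forces $W^s(\mathcal{O})\cap W^u(\mathcal{O}')\neq\emptyset$ for \emph{every} pair of critical orbits. Non-hyperbolic periodic orbits are then excluded (Lemma~\ref{quesaco00}) by perturbing to two hyperbolic orbits of different index and noting that, for a Kupka--Smale flow, the forced heteroclinic intersection is dimensionally impossible; in dimension $3$ the elliptic case is killed by making the return map a rational rotation, so $P^n=\mathrm{id}$, and invoking Plamenevskaya's criterion. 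Your alternative --- an alternating limit-pseudo-orbit between $\gamma$ and a sheared copy $\gamma'$ --- does not survive scrutiny: the alternating construction used for average shadowing (Lemma~\ref{main2}) has jumps of fixed size $\xi/2$ and is therefore \emph{not} a limit-pseudo-orbit; once you shrink the jumps to satisfy $d(X^{t_i}(x_i),x_{i+1})\to 0$ you must chain many intermediate points to traverse the fixed gap between $\gamma$ and $\gamma'$, and you then have to show that no orbit, under an arbitrary orientation-preserving reparametrization $h$, can track this slow drift in the $L^1$-limit sense of (\ref{quesaco11}). You state this incompatibility as the goal but give no argument for it, so the central step is missing.

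The other two gaps are of the same kind. For singularities, you propose a local pseudo-orbit construction near a hyperbolic saddle; but a hyperbolic saddle does not locally obstruct limit shadowing in any evident way (compare the discussion of the Lorenz flow in \S\ref{SP}), and the paper instead rules out singularities globally (Lemma~\ref{quesaco22}) by counting dimensions of $W^u(\sigma)$ and $W^s(\gamma)$ against the intersection forced by Lemma~\ref{rr30B}. Finally, your ``Mañé-style propagation'' from a dominated splitting with hyperbolic periodic orbits to uniform hyperbolicity on all of $M$ is not standard perturbation technology: it is precisely the content of the theorem that nonsingular incompressible star flows are transitive Anosov (Theorem~\ref{sono02}, i.e.\ \cite[Theorem 1]{CF}, the conservative analogue of \cite{SG}), which the paper invokes as a black box after establishing the star property in Corollary~\ref{quesaco20}. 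Without citing such a result, that step is a major unproved claim; with it, your preliminary dominated-splitting layer becomes unnecessary.
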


Then, we formulate these results for the Hamiltonian setting.

\begin{theoremb}
A Hamiltonian system $(H,e, \mathcal{E}_{H,e})$
 $C^2$-stably  average
shadowable, or $C^2$-robustly  asymptotic average shadowable, is a
partially hyperbolic Hamiltonian system.
\end{theoremb}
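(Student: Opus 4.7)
The plan is to deduce Theorem B by the two-step recipe familiar from the dissipative setting: first rule out non-hyperbolic critical orbits under the stability hypothesis, and then use that fact to force a dominated splitting for the transversal linear Poincar\'e flow $\Phi_H^t$ over the whole energy surface. Once a dominated splitting is in place on $\mathcal{E}_{H,e}$, partial hyperbolicity is automatic from the result of \cite{BV} quoted in the paper, since in the Hamiltonian framework a dominated splitting on a compact regular invariant set is already partially hyperbolic.

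First I would show that, under either stability hypothesis, every periodic orbit on $\mathcal{E}_{H,e}$ is hyperbolic. Suppose not: let $\gamma$ be a periodic orbit of period $\pi$ whose transversal linear Poincar\'e map $\Phi_H^\pi(p)$ has an eigenvalue on the unit circle. A Hamiltonian Franks--type perturbation lemma (a $C^2$-small change of $H$, supported in a tubular neighbourhood of $\gamma$, preserving periodicity and the energy level) puts $\Phi_H^\pi(p)$ into a normal form with a two-dimensional symplectic invariant plane on which it acts as an irrational rotation. On a small Poincar\'e section $\Sigma$ transverse to $\gamma$, pick a sequence of points that spiral slowly around $p$ inside this plane; lift it to a sequence in $\mathcal{E}_{H,e}$ with return times close to $\pi$ to obtain a $\delta$-average-pseudo-orbit (respectively, an asymptotic average-pseudo-orbit) with a prescribed small transverse drift. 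For any candidate shadowing orbit of $X_H$, no admissible reparametrization $h$ in (\ref{br02}) (respectively (\ref{saco1000})) can absorb this transverse rotational drift, because the integrand $d(X_H^{h(t)}(x),X_H^{t-s_i}(x_i))$ stays bounded below by the transverse separation on a set of positive density in time. This contradicts stable average (respectively asymptotic average) shadowability, and gives hyperbolicity of every closed orbit of every $\tilde{H}$ in a $C^2$-neighbourhood.

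Next I would upgrade pointwise hyperbolicity of periodic orbits to a dominated splitting of $\Phi_H^t$ over $\mathcal{E}_{H,e}$. Arguing by contradiction, suppose no $\lambda$-dominated splitting exists. Applying a Hamiltonian analogue of the Bonatti--D\'iaz--Pujals dichotomy, together with a Hamiltonian closing lemma to produce closed orbits passing arbitrarily close to any prescribed point of the nonwandering set of the energy surface, we find for every $\lambda<1$ a $C^2$-close Hamiltonian $\tilde{H}$ and a periodic orbit $\tilde{\gamma}\subset\mathcal{E}_{\tilde{H},\tilde{e}}$ whose transversal Poincar\'e map has eigenvalue ratios arbitrarily close to one along two complementary symplectic directions. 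A final Hamiltonian Franks perturbation then turns $\tilde{\gamma}$ into a non-hyperbolic periodic orbit, contradicting the conclusion of the previous paragraph. Therefore a dominated splitting exists on the whole energy level, and by \cite{BV} this is partial hyperbolicity.

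The main obstacle I foresee lies in the first step: constructing non-shadowable pseudo-orbits in the Hamiltonian setting is delicate because both the pseudo-orbit and every candidate shadow must remain on (or infinitesimally close to) the energy level $\mathcal{E}_{H,e}$, and because the reparametrization $h$ in the definitions of shadowing-in-average grants the shadowing orbit enough freedom that it can in principle \emph{outrun} the pseudo-orbit's rotational drift. The cleanest way to control this, I expect, is to transport the argument to the transverse Poincar\'e return map on $\Sigma$, where $h$ collapses to a choice of return times and the obstruction becomes a genuinely two-dimensional rotation problem, for which non-shadowability in average is by now standard. A secondary technical issue is ensuring that both the Franks--type perturbations and the closing lemma used in the two steps can be realised by $C^2$-small changes of $H$, supported in a tubular neighbourhood of $\gamma$ and compatible with the symplectic structure; this is precisely why the hypothesis is phrased in the $C^2$ topology rather than $C^1$.
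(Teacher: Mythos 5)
Your overall architecture matches the paper's: exclude a degenerate spectral configuration at periodic orbits by exhibiting a pseudo-orbit that cannot be shadowed in average, combine this with a perturbative dichotomy to get a uniformly $\ell$-dominated splitting over a Hausdorff-dense family of periodic orbits of nearby Hamiltonians, pass to the limit, and invoke the fact that for Hamiltonians a dominated splitting is automatically partially hyperbolic. The second and third of these steps are essentially what the paper does: it uses generic topological mixing of energy surfaces (\cite{BFR2}) to produce closed orbits that are $1/n$-dense in $\mathcal{E}_{H,e}$ in the Hausdorff metric, applies the dichotomy of Theorem~\ref{main} (from \cite[Theorem 3.4]{BRT}), and concludes with \cite[Remark 2.1]{BRT2}.

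The genuine gap is in your first step, and it is not merely technical. You claim that stable (asymptotic) average shadowability forces \emph{every} periodic orbit of every $C^2$-nearby Hamiltonian to be hyperbolic. That is precisely the Hamiltonian star property, and by \cite[Theorem 1]{BRT} --- which the paper invokes to prove Theorem~B$^\prime$ --- it would immediately make the system Anosov, a conclusion the authors obtain only under the strictly stronger limit-shadowing hypothesis; Theorem~B deliberately claims only partial hyperbolicity, which is compatible with elliptic periodic orbits whose elliptic directions sit inside the centre bundle. The reason your obstruction argument cannot deliver the stronger claim is exactly the difficulty you flag and do not resolve: once the transversal Poincar\'e map is a genuine rotation on a symplectic plane, any candidate shadowing orbit rotates as well, the rotation preserves the transverse ``radius'', and the reparametrization $h$ lets the shadow adjust its angular phase, so your assertion that the integrand is bounded below on a set of positive time-density is unsubstantiated. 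The paper sidesteps this entirely: the second alternative of the dichotomy, implemented via \cite[Lemma 6.1]{BRT2}, produces after a further perturbation a periodic orbit for which a power of the transversal Poincar\'e map is the \emph{identity} on a whole neighbourhood; only then does the alternating two-point pseudo-orbit of Lemma~\ref{main2} (geometrically growing blocks spent at each of two transversally \emph{fixed} points at mutual distance $3\epsilon$) visibly defeat average shadowing, since the shadowing orbit would have to be $\epsilon$-close in average to both. So what must be ruled out is not ellipticity but total spectral degeneracy, and the pseudo-orbit you need is the alternating one between fixed points, not a spiral around a rotation. With your first step replaced by the Hamiltonian analogue of Lemma~\ref{main2} and Lemma~\ref{socorro500}, the rest of your argument goes through as in the paper.
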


\begin{theorembb}
A Hamiltonian system $(H,e, \mathcal{E}_{H,e})$
 $C^2$-stably  limit
shadowable is Anosov.
\end{theorembb}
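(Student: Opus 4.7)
The plan is to adapt the proof of Theorem A$'$ to the Hamiltonian setting, with the energy hypersurface $\mathcal{E}_{H,e}$ in the role of the ambient manifold and the transversal linear Poincar\'e flow $\Phi_H^t$ on the $(2d-2)$-dimensional normal bundle $\mathcal{N}$ replacing the linear Poincar\'e flow. The argument proceeds in three blocks: kill singularities and make every periodic orbit hyperbolic, extract a partially hyperbolic splitting on $\mathcal{E}_{H,e}$, and finally trivialise the centre.

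First, I would exploit the $C^2$-stability of limit shadowing to rule out singularities on $\mathcal{E}_{H,e}$ and to force every periodic orbit in $\mathcal{E}_{H,e}$ to be hyperbolic. Singularities are excluded by producing a limit pseudo-orbit accumulating on any would-be singularity and observing that no reparametrized true orbit shadows it in the sense of (\ref{quesaco11}), even after an arbitrarily $C^2$-small perturbation of $H$ keeping us in the stability neighbourhood. For the hyperbolicity of periodic orbits, I argue by contradiction: from a non-hyperbolic periodic orbit $\gamma$ of period $\pi$, a Hamiltonian Franks-type symplectic perturbation lemma supplies an arbitrarily $C^2$-close $\tilde H$ whose continuation of $\gamma$ has trivial real spectrum for $\Phi_{\tilde H}^{\pi}$. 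On a Poincar\'e section transverse to this orbit the linearised dynamics then acts as an isometry, and concatenating small normal translations one builds a limit pseudo-orbit that no true orbit of $\tilde H$ can shadow in limit, violating the stability hypothesis.

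Second, I would extract a $\Phi_H^t$-invariant partially hyperbolic splitting $\mathcal{N}=\mathcal{N}^u\oplus \mathcal{N}^c\oplus \mathcal{N}^s$ over $\mathcal{E}_{H,e}$ by running the same perturbative arguments that underpin Theorem B, now fed with limit pseudo-orbits rather than average pseudo-orbits; the qualitative obstructions delivered by stable limit shadowing on the perturbed Hamiltonians are at least as strong as those of average shadowing, so the Hamiltonian dominated/partially hyperbolic mechanism of \cite{BV} applies. To upgrade partial hyperbolicity to Anosov I then prove $\mathcal{N}^c=\{0\}$: otherwise, by a Hamiltonian conservative closing lemma, there is a periodic orbit in $\mathcal{E}_{H,e}$ whose central bundle is non-trivial, and a further Franks-type perturbation concentrated in that central direction creates an eigenvalue of $\Phi_{\tilde H}^{\tilde\pi}$ on the unit circle, contradicting the hyperbolicity of periodic orbits established in the first block.

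With $\mathcal{N}^c$ trivial, the splitting $\mathcal{N}=\mathcal{N}^u\oplus \mathcal{N}^s$ is uniformly hyperbolic, and adjoining the flow direction $E^0_x=\mathbb{R}\cdot X_H(x)$ extends this to a hyperbolic splitting of $T\mathcal{E}_{H,e}$, so $(H,e,\mathcal{E}_{H,e})$ is Anosov. The main obstacle I anticipate lies entirely in the first block: producing a symplectic, energy-level-preserving, $C^2$-small perturbation that realises a prescribed transversal linear Poincar\'e spectrum at a periodic orbit, while keeping the perturbed system inside the stable limit shadowing neighbourhood, requires delicate symplectic realisation lemmas absent in the dissipative case. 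Converting the resulting isometric local behaviour into a concrete limit pseudo-orbit that is provably not shadowed under the reparametrizations allowed in (\ref{quesaco11}) is where most of the technical bookkeeping will be spent.
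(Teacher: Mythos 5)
Your first block is broadly in line with what the paper does, but your second and third blocks contain genuine gaps. The paper's route is: stable limit shadowing forces every periodic orbit of every nearby Hamiltonian system to be hyperbolic (Lemma \ref{acabou01}, proved as in Theorem A$^\prime$ via a Franks-type perturbation to a rational rotation plus the criterion of \cite{Pl} when $\dim M=4$, and via the index/Kupka--Smale argument against Lemma \ref{rr30B} in higher dimension); hence $(H,e,\mathcal{E}_{H,e})$ is a star Hamiltonian system, and one then invokes \cite[Theorem 1]{BRT}, which says that Hamiltonian star systems are Anosov. Your proposal replaces that final citation by ``establish partial hyperbolicity as in Theorem B, then kill the centre'', and both halves of the replacement are problematic. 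First, the perturbative machinery behind Theorem B is driven by the alternating pseudo-orbit (\ref{rrbp}), which jumps by the fixed amount $\xi/2$ infinitely often; it is a $\delta$-average-pseudo-orbit but emphatically \emph{not} a limit pseudo-orbit, since $d(X^{t_i}(x_i),x_{i+1})$ does not tend to $0$. So the assertion that the obstructions delivered by stable limit shadowing are ``at least as strong as'' those of average shadowing is unsubstantiated --- the paper itself stresses that these shadowing notions are not comparable --- and you cannot simply feed limit pseudo-orbits into the Theorem B argument without constructing a genuinely different pseudo-orbit.

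Second, the step ``partially hyperbolic plus robustly hyperbolic periodic orbits implies $\mathcal{N}^c=\{0\}$'' is essentially the content of the star-flow theorem (\cite{SG} in the dissipative case, \cite[Theorem 1]{BRT} in the Hamiltonian one), which is a deep result: a closing lemma produces periodic orbits of \emph{perturbed} systems, and a Franks-type perturbation in the central direction only creates a unit-modulus eigenvalue if one first controls the central Lyapunov exponents along those orbits (ergodic-closing-lemma-type estimates). Your one-sentence sketch does not supply this, so you are in effect reproving \cite[Theorem 1]{BRT} without its main technical input. Two smaller points: the paper only considers regular energy surfaces, so there are no singularities on $\mathcal{E}_{H,e}$ to rule out; and ``trivial real spectrum'' does not by itself make the return map an isometry (Jordan blocks may appear), which is why the paper perturbs to a rational rotation so that some power of the transversal Poincar\'e map is the identity before applying \cite{Pl}.
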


Finally, and for the sake of fulfillment of the literature, we obtain a
generalization of the result in \cite[Theorem 4]{AR} for higher
dimensional flows.

\begin{theoremc}
If $X^t$ is $C^1$-stably average shadowable, or $C^1$-stably
asymptotic average shadowable, then $X^t$ admits a dominated
splitting on $M$.
\end{theoremc}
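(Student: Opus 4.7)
The plan is to adapt the strategy underlying Theorem A to the dissipative setting, via a Ma\~n\'e-style reduction to the linear Poincar\'e flow over the set of periodic orbits. First I would argue that stable average, or asymptotic average, shadowing forbids singularities: as recalled in \S\ref{SP}, any average-type shadowable flow has neither sinks nor sources, and a singularity of $X$ could be turned into a sink by a $C^1$-small Franks-type perturbation $Y$ of $X$, which would contradict the stability hypothesis applied to $Y$. Hence $\mathcal{R}=M$ and the linear Poincar\'e flow $P_X^t$ is defined on the whole manifold.

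Next I would prove that every periodic orbit of every $Y$ in a fixed $C^1$-neighbourhood of $X$ is hyperbolic by the same kind of argument: a non-hyperbolic Poincar\'e eigenvalue could be pushed inside the unit disc by a $C^1$-small perturbation, producing a sink and breaking the stability of the shadowing. With every periodic orbit hyperbolic throughout a whole neighbourhood of $X$, the central step is to establish a uniform dominated splitting of $P_Y^t$ over $\Per(Y)$ for every nearby $Y$. Following the classical Ma\~n\'e scheme, if such domination were to fail one could invoke Franks' lemma for flows to mix two transverse directions of the Poincar\'e return map and produce a periodic orbit with a non-real pair of eigenvalues of equal modulus; a further small perturbation would then render this orbit a sink, once again contradicting stable shadowing.

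Once uniform domination is secured on $\Per(X)$, I would extend it first to $\overline{\Per(X)}$ by the semi-continuity of dominated splittings, and then to a larger $X^t$-invariant set via the flow version of Ma\~n\'e's theorem. The concluding step is to identify this set with the whole $M$, and this is where I expect the main difficulty to lie. In the incompressible case of Theorem A the identification is automatic, because the invariance of the volume measure forces every point to be non-wandering; in the dissipative setting one must instead leverage the stability of the shadowing itself, together with Pugh's closing lemma and $C^1$-genericity, to guarantee that $\overline{\Per(X)}=M$. A secondary, more technical obstacle is the adaptation of Franks' lemma to the vector-field category: one needs to modify $X$ so as to prescribe the linearised flow along a chosen periodic orbit while keeping the perturbation localised in a tubular neighbourhood of that orbit, something that requires a careful use of the exponential chart and of conservative extensions tailored to the dissipative regime.
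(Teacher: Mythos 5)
Your overall Ma\~n\'e-style scheme is in the right spirit, but two of your perturbation claims are false as stated, and the decisive step is exactly the one your outline does not supply. First, a $C^1$-small Franks-type perturbation cannot in general turn a singularity, nor a non-hyperbolic periodic orbit, into a sink: if $DX(\sigma)$ (respectively the Poincar\'e return map) has an eigenvalue far from the unit circle, no small perturbation of the derivative brings the whole spectrum inside it. The correct dichotomy, which is what the paper uses, is \cite[Corollary 2.22]{BGV}: for periodic orbits of sufficiently large period, either the linear Poincar\'e flow is uniformly $\ell$-dominated along the orbit, or a small perturbation first equalises the moduli of all eigenvalues and only then produces a sink or a source. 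In particular your intermediate goal --- hyperbolicity of \emph{every} periodic orbit of every nearby $Y$ --- is unreachable by your argument, and it is not what is needed: the theorem only asserts a dominated splitting. Singularities are excluded in the paper by quoting \cite[Theorem 15]{AR}, whose mechanism is the incompatibility of a hyperbolic saddle-type singularity with a dominated splitting of the linear Poincar\'e flow (in the spirit of Doering and Vivier), not the creation of sinks.

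Second, the step you flag as ``the main difficulty'' is left unresolved, and Pugh's closing lemma will not resolve it: generically it yields $\overline{\Per(Y)}=\Omega(Y)$, which for a dissipative flow can be a proper subset of $M$, and it does not produce \emph{individual} periodic orbits that are Hausdorff-close to $M$. The missing ingredient is that the (asymptotic) average shadowing property implies chain transitivity of the whole manifold (\cite{GSX,GSX3}), so the stability hypothesis gives \emph{robust} chain transitivity; Crovisier's theorem \cite[Theorem 4]{C} then furnishes generic $Y_n\to X$ together with periodic orbits $O_{Y_n}(p_n)$ converging to $M$ in the Hausdorff distance. Since chain transitivity rules out sinks and sources, the first alternative of the dichotomy above holds along these orbits with $\ell$ independent of $n$, and passing to the limit gives the $\ell$-dominated splitting on all of $M$. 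This reduction of Theorem~C to the statement ``robustly chain transitive implies dominated splitting on $M$'' is the actual content of the paper's proof and is absent from your proposal.
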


With respect to the limit shadowing case, in \cite[Theorem 1.1]{L2}, Lee recently proved that a  $d$-dimensional  flow  ($d \geq 3$) which is $C^1$-stably limit
shadowable is a transitive  Anosov flow.

We observe that in Theorems A$^\prime$, B$^\prime$ and \cite[Theorem 1.1]{L2} the converse statement is also valid.

The analysis of shadowing for flows is certainly more complicated
than for maps due to presence of reparametrizations of the
trajectories and the (possible) presence of singularities. The
main results in \cite{BC1}  and \cite{LW} can now be obtained from
the Theorem C,  by consideration of suspension flow. Complementing
these result with the other types of systems which we consider
here, we have the following.

\medskip

\begin{corollary}
~
\begin{enumerate}

    \item A   $C^1$-volume preserving
diffeomorphism which is
 $C^1$-stably (asymptotic) average shadowable admits a dominated splitting on
 $M$, and  a $C^1$-stably limit shadowable is transitive Anosov\footnote{The definitions of hyperbolicity and shadowing for the case of diffeomorphisms are analogous to the ones in the case of flows. See \cite{S1}.}.

    \item  A symplectomorphism stably (asymptotic) average
shadowable is partially hyperbolic, and a stably limit shadowable
is Anosov.
\item   A   $C^1$-diffeomorphism  which is
 $C^1$-stably (asymptotic) average shadowable admits a dominated splitting on
 $M$, and  a $C^1$-stably limit shadowable is transitive Anosov.

\end{enumerate}

\end{corollary}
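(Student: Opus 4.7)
The plan is to derive the corollary from the flow theorems already proved in the paper, via the classical suspension construction. Given a diffeomorphism $f\colon M\to M$, form the mapping torus $M_f=(M\times[0,1])/\!\sim$, where $(x,1)\sim(f(x),0)$, equipped with the suspension flow $\phi^t$ generated by $\partial/\partial t$. The map $f$ is then the Poincar\'e return map of $\phi^t$ to the global cross-section $M\times\{0\}$. In the volume-preserving case, if $f$ preserves a measure $\mu$, then $\phi^t$ preserves $\mu\otimes dt$ on $M_f$. In the symplectic case, the suspension of a symplectomorphism can be realized (after the standard symplectization on $M\times T^*S^1$) as a Hamiltonian flow restricted to a regular energy level $\mathcal{E}_{H,e}$, so that $(H,e,\mathcal{E}_{H,e})$ is a Hamiltonian system whose dynamics is time-$1$ conjugate to $f$.

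The second step is the stability transfer: $C^1$-small perturbations of $\phi^t$ on $M_f$ correspond, via the section, to $C^1$-small perturbations of $f$ on $M$ (up to a reparametrization contained in the return time), and vice-versa; analogously in the volume-preserving and Hamiltonian/symplectic settings (here one uses Darboux charts around the cross-section, in the spirit of \cite{MZ}, to identify $C^2$-small Hamiltonian perturbations with $C^1$-small symplectomorphism perturbations of $f$). It then remains to check that each of the three shadowing notions transfers between $f$ and $\phi^t$. A $\delta$-(average / asymptotic-average / limit) pseudo-orbit of $f$ naturally defines one for $\phi^t$ by setting $(x_i,t_i)=(x_i,1)$; conversely, a $\delta$-pseudo-orbit of $\phi^t$ can be projected to the cross-section, the return times being bounded away from $0$ and $\infty$, so averages and limits of the integrals along trajectories of $\phi^t$ differ from the corresponding discrete sums for $f$ only by a multiplicative constant. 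The reparametrization $h$ allowed in the definitions absorbs the mild time distortion produced by the projection.

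Once the stability transfer is established, the conclusion is immediate. For item (3) apply Theorem~C and Lee's theorem \cite[Theorem 1.1]{L2} to the suspension: $\phi^t$ admits a dominated splitting on $M_f$, respectively is transitive Anosov, and restricting the $\phi^t$-invariant dominated (resp.\ hyperbolic) splitting of the linear Poincar\'e flow to $N_x$ at points $x\in M\times\{0\}$ yields an $Df$-invariant dominated (resp.\ hyperbolic) splitting on $TM$. For item (1) the same argument uses Theorems~A and~A$'$ applied to the incompressible suspension. For item (2) we apply Theorems~B and~B$'$ to the Hamiltonian suspension; partial hyperbolicity of the transversal linear Poincar\'e flow $\Phi_H^t$ on $\mathcal{N}$ descends to a $Df$-invariant partial hyperbolic splitting of $TM$, since $\mathcal{N}|_{M\times\{0\}}$ is canonically identified with $TM$. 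Transitivity in the Anosov cases follows because a global cross-section of a transitive Anosov flow has transitive return map.

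The main technical obstacle is the second step: making the shadowing transfer precise, particularly for the average and asymptotic-average versions where one must compare the time-integral quantity
\[
\frac{1}{n}\sum_{i=0}^n \int_{s_i}^{s_{i+1}} d\!\bigl(\phi^{h(t)}(x),\phi^{t-s_i}(x_i)\bigr)\,dt
\]
with the corresponding discrete sum for $f$, controlling uniformly the reparametrization $h$ and showing that a stably shadowable $f$ cannot be perturbed so that its suspension loses the corresponding flow shadowing property. Once this bookkeeping is done carefully, the corollary is a direct application of the flow theorems.
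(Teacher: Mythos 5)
Your proposal takes essentially the same route as the paper: the authors justify this corollary only by the one-line remark that the results ``can now be obtained \ldots by consideration of suspension flow,'' and your argument is a detailed elaboration of exactly that reduction (suspension/symplectization, transfer of the stable shadowing property via the persistent global cross-section, application of Theorems~A, A$'$, B, B$'$, C and \cite[Theorem 1.1]{L2}, and descent of the splitting to the section). The technical transfer step you flag as the main obstacle is indeed the only substantive content, and the paper itself supplies no more detail on it than you do.
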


It is noteworthy that, in \cite[Theorem 4]{AR} it is proved that
a $C^1$ stably (asymptotic) average shadowable flow is Anosov
and transitive. In our incompressible $3$-dimensional flow
setting the stability of transitivity is obtained a priori since
by ~\cite{BD} we know that away from Anosov incompressible flows
we have elliptic orbits, thus invariant tori can be created by
small $C^1$-perturbations using \cite{AM}, and so robust
transitivity is not feasible.

We end this introduction by recalling several results in the vein
of ours proved in ~\cite{BR} - \emph{$C^1$-robust topologically
stably incompressible flows are  Anosov}, in ~\cite{F} -
\emph{$C^1$-robust (Lipschitz) shadowing incompressible flows are
Anosov} and in ~\cite{BLV} - \emph{$C^1$-robust weak shadowing
incompressible flows are volume-hyperbolic}. Another result which
relates $C^1$-robust properties with hyperbolicity is the result
in ~\cite{BR3} which states that $C^1$-robustly transitive
incompressible flows have dominated splitting. See also the results in \cite{BRT,BRT2, RRBP} for flows
and in \cite{L,LW} for diffeomorphisms.
\end{subsection}



\section{Incompressible flows - Proofs of Theorems~A and A$^\prime$}\label{diff}

In this section we study the  (asymptotic) average shadowing and
limit shadowing properties in the context of incompressible flows.
This section is divided in two subsections:  the first part is
dedicated to some results about (asymptotic) average shadowing
property which are necessary to prove the Theorem A. The second
subsection is devoted to study the limit shadowing property and
the proof of Theorem A$^\prime$.

\begin{subsection}{(Asymptotic) Average shadowing property - Proof of Theorem A}

We begin by recalling the following result  which was proved in \cite[Proposition 2.4]{BR3} and is the volume-preserving version of \cite[Corollary 2.22]{BGV}:

\begin{proposition}\label{BGV3}
Let $X\in \mathfrak{X}^1_{\mu} (M)$ and fix a small $\epsilon_0 >0$. There exist $\pi_0,\ell\in \mathbb{N}$ such that for any closed orbit $x$ with period $\pi(x)>\pi_0$ we have either
\begin{enumerate}
\item [(i)] that $P_X^t$ has an $\ell$-dominated splitting along the orbit of $x$ or else
\item [(ii)] for any neighborhood $U$ of $\cup_{t}X^t(x)$, there exists an $\epsilon$-$C^1$-per\-tur\-ba\-tion $Y$ of $X$, coinciding with $X$ outside $U$ and on $\cup_{t}X^t(x)$, and such that  $P_Y^{\pi(x)}(x)$ has all eigenvalues equal to $1$ and $-1$.
\end{enumerate}
\end{proposition}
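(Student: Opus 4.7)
The plan is to prove the dichotomy by contraposition: assume conclusion (i) fails for the given $\ell$ and construct the perturbation required in (ii). This is the volume-preserving adaptation of the Bonatti--Gourmelon--Vivier theorem \cite{BGV}, so the overall strategy parallels theirs, with the additional constraint that every perturbation must lie in $\mathfrak{X}^1_\mu(M)$ and fix the orbit of $x$ pointwise. Given $\epsilon_0>0$, I would first choose $\ell\in\N$ large enough that $\ell$-domination is the effective threshold in the linear-algebra dichotomy for the linear Poincar\'e flow (after restricting, if necessary, to the most defective invariant $2$-plane inside the normal bundle $N$; in the $3$-dimensional case $N$ itself is already a plane). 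Then pick $\pi_0$ large with respect to $\ell$ and $\epsilon_0$, so that enough mutually disjoint perturbation windows fit along any orbit of period exceeding $\pi_0$.

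Next, suppose $x$ is a closed orbit with $\pi(x)>\pi_0$ whose orbit carries no $\ell$-dominated splitting for $P_X^t$. By the dichotomy alluded to above, one obtains well-spaced times $0\leq t_1<\dots<t_k\leq\pi(x)$ and small rotations $R_i$ of the normal planes $N_{X^{t_i}(x)}$, of total angular size at most $\epsilon_0$, such that the perturbed return cocycle
\[
P_X^{\pi(x)-t_k}\,R_k\,P_X^{t_k-t_{k-1}}\cdots R_2\,P_X^{t_2-t_1}\,R_1\,P_X^{t_1}
\]
has all eigenvalues in $\{1,-1\}$. The underlying geometric fact is that failure of $\ell$-domination lets the candidate stable and unstable directions be tilted into one another by arbitrarily small rotations, producing the required trivial real spectrum over a full period.

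Each linear $R_i$ must then be realized as a $C^1$-small divergence-free modification of $X$, supported in a thin flow-box around $X^{t_i}(x)$ contained in the given neighbourhood $U$ and coinciding with $X$ on the orbit of $x$. For this I would invoke a conservative Franks-type lemma for flows, of the kind developed and used in \cite{BR3}, which plants a prescribed rotation of the linear Poincar\'e flow via a divergence-free perturbation supported in a small tubular neighbourhood of a single orbit point, without altering the orbit itself or its period. Composing these $k$ local perturbations (pairwise disjoint by the choice of $\pi_0$) yields the desired $Y$ with $P_Y^{\pi(x)}(x)$ having only eigenvalues $\pm 1$.

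The main obstacle is precisely this final step: simultaneously enforcing $Y=X$ on $\bigcup_t X^t(x)$, producing the prescribed angular corrections at each $t_i$, and staying within a total $C^1$-budget of $\epsilon_0$, all while remaining divergence-free. The volume constraint rules out the free use of bump functions; one must work with Hamiltonian-type generating potentials in Darboux coordinates adapted to the flow tube, and keeping the $C^1$-norm in check across the $k$ perturbations is what ultimately dictates the concrete choice of $\pi_0$.
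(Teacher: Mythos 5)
The paper does not actually prove this proposition: it is imported verbatim from \cite[Proposition 2.4]{BR3}, the volume-preserving version of \cite[Corollary 2.22]{BGV}. Your outline --- contraposition, the Bonatti--Gourmelon--Vivier cocycle dichotomy producing small rotations with trivial real spectrum when $\ell$-domination fails, and realization of those rotations by divergence-free Franks-type perturbations in pairwise disjoint flow boxes along the orbit --- is precisely the strategy of that cited proof, so it takes essentially the same approach as the paper's (imported) argument.
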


To study perturbations it is convenient to work with linear systems.
Before enunciate the next result let us recall some of these
notions. Given $X\in \mathfrak{X}_\mu^1(M)$ and a regular point
$p$ we consider a \emph{linear differential system} (see
\cite{BR3,BR4} for full details on definitions) over the orbit of
$p$ in the following way:
$$S^t\colon\mathbb{R}^d_{p} \rightarrow \mathbb{R}^d_{X^t(p)}$$
is such that
\begin{itemize}
 \item $S^t \in \text{SL}(d, \mathbb{R})$, for every  $t$;
 \item $S^0=id$ and $S^{t+r}=S^t \circ S^r$, for every $s,t$ and
   \item $S^t$ is differentiable in $t$.
\end{itemize}

In ~\cite[Subsection 2.2]{BR4} it is developed a way to obtain
``good coordinates" adequate to the continuous-time setting.
Actually, we translate, in a conservative fashion, our flow from
the manifold $M$ to the Euclidian space $\mathbb{R}^d$ with
coordinates $(x,y_1,...,y_{d-1})$, and, considering
$v=\frac{\partial}{\partial x}$ we get the following local  linear
representation of the flow $Y$, say valid for $\|(y,z)\|$ very
small;
\begin{equation}\label{GC}
\hat{X}^t((0,y_1,...,y_{d-1}))=tv+S^t(0,y_1,...,y_{d-1}),
\end{equation}
where, in rough terms, $S^t$ represents the action of the linear Poincar\'e flow $P_Y^t$. We observe that, since the flow is linear, the linear Poincar\'e flow equals the Poincar\'e map itself (see \cite{BR4} for full details). Finally, and after performing the perturbations we want, we use the \emph{Pasting Lemma} (see \cite{AM}) to spread (in a volume-preserving way) the linear vector field into a divergence-free vector field that coincides with the original vector field outside a small neighborhood of the periodic orbit.

\begin{lemma}\label{main2}
If $X\in\mathfrak{X}^1_{\mu} (M)$  and  $X^t$ is $C^1$-stably
average shadowable, then any $Y \in \mathfrak{X}^1_{\mu} (M)$
sufficiently $C^1$-close to $X$ does not contains closed orbits
with  trivial real spectrum.
\end{lemma}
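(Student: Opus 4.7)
The plan is to argue by contradiction. Suppose there exists $Y \in \mathfrak{X}^1_\mu(M)$, arbitrarily $C^1$-close to $X$, carrying a closed orbit $\gamma$ of period $\pi$ with trivial real spectrum. Since $X^t$ is $C^1$-stably average shadowable, I may shrink the neighborhood $\mathcal{U}$ of $X$ so that every $Z \in \mathcal{U}$ has the average shadowing property, and assume $Y \in \mathcal{U}$ together with a small $C^1$-ball around $Y$. The eigenvalues of $P_Y^\pi(\gamma)$ are then all $\pm 1$ and diagonalizable (trivial real spectrum in the incompressible case).

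First I would pass from $Y$ to a more rigid local model. Using the ``good coordinates'' representation (\ref{GC}) in a tubular neighborhood $U$ of $\gamma$, the flow is conjugated to $(x,y) \mapsto (x+t, S^t(y))$ with $S^t \in \mathrm{SL}(d-1,\mathbb{R})$, and the linear Poincar\'e flow at the closing time is $\pm \mathrm{Id}$ at $\gamma$. Invoking a Franks-type perturbation in the spirit of Proposition \ref{BGV3}, pasted back into $M$ via the Pasting Lemma of \cite{AM}, I would produce $Z \in \mathcal{U}$ coinciding with $Y$ outside $U$ and with $\gamma$, whose Poincar\'e return map on a transversal disk at $\gamma$ is \emph{exactly} $\pm \mathrm{Id}$. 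Inside $U$ the flow of $Z$ is then a one-parameter family of periodic orbits of period $\pi$ (or $2\pi$), from which I choose two distinct ones $\gamma_1,\gamma_2 \subset U$ at distance $d(\gamma_1,\gamma_2) = 2\eta > 0$.

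Next, fix $\epsilon := \eta/4$ and, for any prescribed $\delta>0$, build an average $\delta$-pseudo-orbit $(x_i,t_i)_{i\in\mathbb{Z}}$ of $Z$ that spends a long block of length $N_k\pi$ on $\gamma_1$, jumps at cost $\leq 2\eta$ to a point of $\gamma_2$, spends $N_k\pi$ on $\gamma_2$, jumps back, and so on, with block lengths $N_k \to \infty$ fast enough that the Ces\`aro average of jump errors falls below any $\delta$. By the average shadowing property of $Z$, some orbit $Z^{h(t)}(x)$ would $\epsilon$-shadow this pseudo-orbit in average. Along the $\gamma_1$-blocks the time average of $d(Z^{h(t)}(x),\gamma_1)$ must eventually be $\leq \epsilon$, and analogously for $\gamma_2$ on the subsequent blocks; by the triangle inequality the shadowing orbit must cover a distance $\geq 2\eta - 2\epsilon = 3\eta/2$ between consecutive block-pairs, arbitrarily often. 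But in the rigid local model of $Z$ on $U$, every orbit segment either remains on a single periodic leaf of the foliation (hence cannot approach both $\gamma_1$ and $\gamma_2$) or exits $U$ permanently in forward time, after which it cannot re-enter to follow $\gamma_2$ closely; volume preservation and the $\pm\mathrm{Id}$ return map preclude any recurrent transit between $\gamma_1$ and $\gamma_2$ inside $U$. This contradicts the existence of the shadowing orbit.

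The main technical hurdle is the second paragraph: producing $Z$ whose Poincar\'e return map is $\pm \mathrm{Id}$ on a whole disk while remaining $C^1$-close to $Y$ and preserving volume. This is exactly what the good coordinates (\ref{GC}) combined with the Pasting Lemma are designed to deliver, but the quantitative control --- choosing the size of $U$, the size of the perturbation, and the growth of $N_k$ in the pseudo-orbit consistently with the reparametrization $h$ --- must be performed with care, since a reparametrized orbit in the flow setting has much more freedom than an orbit of a diffeomorphism, which is what makes the argument here strictly more delicate than its discrete-time counterpart \cite{Ba}.
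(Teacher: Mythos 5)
Your proposal follows essentially the same route as the paper's proof: pass to the linear ``good coordinates'' model (\ref{GC}) near the orbit with trivial real spectrum, use the Pasting Lemma to realize a volume-preserving perturbation in which the time-$2\pi$ return is the identity on a transversal disk, pick two nearby periodic points/orbits, feed an alternating average-pseudo-orbit with rapidly growing blocks to the average shadowing property, and derive a contradiction because the rigid local foliation by periodic orbits forbids any true orbit from being close in average to both. The only cosmetic differences are that the paper works with two points $w_1,w_2=-w_1$ and blocks of length $2^j$ rather than two periodic leaves and blocks $N_k$, and phrases the final contradiction as the shadowing orbit being trapped near $w_1$ and hence uniformly far from the orbit of $w_2$.
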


\begin{proof}

The proof is by contradiction. Taking into account
Theorem~\ref{sing} let us assume that there exists a $C^1$-stably
average shadowable incompressible flow $X^t$ having a
non-hyperbolic closed orbit $q$ of period $\pi$ and with trivial
real spectrum. Now, we consider a representation of $X^t$, say $\hat{X}^t$, in
the linear coordinates given in (\ref{GC}). Thus, there exists an
eigenvalue, $\lambda$, with modulus equal to one for
$S^{\pi}(\vec{0})$ (where $\vec{0}$ is the image of $q$ in this
change of coordinates). Thus,
\begin{equation}\label{chato5000}\hat{X}^{2\pi}(0,y_1,...,y_{d-1})=2\pi
v+S^{2\pi}(0,y_1,...,y_{d-1})=id\end{equation} holds, say in a
$\xi$-neighborhood of $\vec{0}$. Recall that, since $\hat{X}^t$
has the average shadowing property  $\hat{X}^{2\pi\,t}$ also has.

Take two points $w_1=(0,y_1,...,y_{d-1})$ and $w_2=-w_1$, with
$d(w_1,w_2)=\xi/2$ and take $\epsilon:=d(w_1,w_2)/ 3$.
Given $\delta > 0$, as in the definition of average shadowing,
pick $n_0=n_0(\delta, \epsilon)$ a sufficiently large positive
integer such that ${\xi}/{n_0}< \delta$.

Then, we define a  sequence $(x_i,t_i)_{i\in\mathbb{Z}}$ which
$x_i$ takes the values $w_1$ and $w_2$ alternately by steps of
length $2 ^{j}$, in the following way:
\begin{eqnarray}\label{rrbp}
x_i &=&  w_1, \,\,\,t_i = 1\,\,\,\,\text{if } 2^{2j} \leq i < 2^{2j+1}~,\,\,\,\,\,\,\,\,\,\,\,\,\,\,\\
x_i &=& w_2,\,\,\,t_i = 1\,\,\,\,\text{if } 2^{2j+1}\leq i <
2^{2(j + 1)} \nonumber
\end{eqnarray} for $j\in\mathbb{Z}$.

The sequence $(x_i,t_i)_{i\in\mathbb{Z}}$  is a
 $\delta$-average-pseudo-orbit of $\hat{X}^{2\pi}$. Indeed, take   $m\in \mathbb{N}$  large enough,
 such that $m > 2^{n_0}$. Then  $m=2^{n} +r$, with $n \geq n_0$, $n \in \mathbb{N}$, and  $r \in \{0, \ldots, 2^{n}-1
 \}$. So,
\begin{eqnarray}\label{chato1000}\frac{1}{m}\sum_{k=1}^{m}
d(\hat{X}^{2\pi(t_{k+i})}(x_{k+i}),x_{k+i+1})&=&\frac{1}{2^{n}+r}\sum_{k=1}^{2^{n}+r}
d(\hat{X}^{2\pi(t_{k+i})}(x_{k+i}),x_{k+i+1}) \\\nonumber &\leq&
\frac{1}{2^{n} }\sum_{k=1}^{2^{n}}
d(\hat{X}^{2\pi(t_{k+i})}(x_{k+i}),x_{k+i+1}) <\dfrac{n}{2^{n}}\cdot\dfrac{\xi}{2}<\dfrac{1}{n}\cdot\dfrac{\xi}{2}<
\delta.
\end{eqnarray}

Denote $\hat{X}^{2\pi}$ by ${X}$. So the $\delta$-pseudo orbit
$(x_i,t_i)_{i\in\mathbb{Z}}$ can be $\epsilon / 2$ positively
shadowed in average by  the orbit of ${X}$ through some point $ z
\in M$, that is, there is an orientation preserving homeomorphism
$h\colon \mathbb{R}\rightarrow \mathbb{R}$ with $h(0)=0$ such that
\begin{equation}\label{rr22}
    \underset{n\rightarrow \infty}{\limsup}\frac{1}{n}\sum_{i=1}^n \int_{i}^{{i+1}} d(X^{h(t)}(x),
    X^{t-i}(x_i))dt<\epsilon/2.
\end{equation}
We can assume without loss of generality that
$d(z,w_1)<{\epsilon}/{2}$. Thus,
$$d(X^{h(t)}(z),X^{t-i}(w_2))>2\epsilon \;\; \textmd{for}\; \textmd{all} \;\; t \in \mathbb{R}.$$

\noindent This implies that for every $n> n_0$ enough large,
$$ \frac{1}{n}\sum_{i=1}^n \int_{i}^{{i+1}} d(X^{h(t)}(z),
    X^{t-i}(x_i))dt > {\epsilon},$$

     \noindent and therefore, $$ \underset{n\rightarrow \infty}{\limsup}\frac{1}{n}\sum_{i=1}^n \int_{i}^{{i+1}} d(X^{h(t)}(z),
    X^{t-i}(x_i))dt > \frac{\epsilon}{2}.$$

\noindent This contradicts (\ref{rr22}).
\end{proof}

\begin{lemma}\label{socorro500}
If $X\in\mathfrak{X}^1_{\mu} (M)$ and $X^t$ is $C^1$-stably
asymptotic average shadowable, then any $Y \in
\mathfrak{X}^1_{\mu} (M)$ sufficiently $C^1$-close to $X$ does not
contains closed orbits with  trivial real spectrum.
\end{lemma}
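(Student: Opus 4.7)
The plan is to run essentially the same argument as in Lemma \ref{main2}, taking advantage of the fact that the sequence $(x_i,t_i)_{i\in\Z}$ constructed there is already sparse enough in its ``jumps'' to be an asymptotic average pseudo-orbit, not just a $\delta$-average pseudo-orbit. So I argue by contradiction: assume some $Y\in \mathfrak{X}^1_\mu(M)$ arbitrarily $C^1$-close to $X$ has a closed orbit $q$ of period $\pi$ with trivial real spectrum. As before, I use the good coordinates (\ref{GC}) to write the local flow as $\hat Y^t$, and since all eigenvalues of $S^\pi(\vec 0)$ are $\pm 1$, we get $\hat Y^{2\pi}=\mathrm{id}$ on a $\xi$-neighborhood of $\vec 0$. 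Writing $X:=\hat Y^{2\pi}$ I work with the time-$2\pi$ map, which inherits the asymptotic average shadowing property.

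Next I choose $w_1,w_2=-w_1$ in this neighborhood with $d(w_1,w_2)=\xi/2$, set $\epsilon:=d(w_1,w_2)/3$, and define the sequence $(x_i,t_i)_{i\in\Z}$ by the same block construction (\ref{rrbp}): $x_i=w_1$ on $[2^{2j},2^{2j+1})$ and $x_i=w_2$ on $[2^{2j+1},2^{2(j+1)})$, with all $t_i=1$. The crucial point is to check the asymptotic average pseudo-orbit condition
\begin{equation*}
\lim_{n\to\infty}\frac{1}{n}\sum_{i=-n}^{n} d(X^{t_i}(x_i),x_{i+1})=0.
\end{equation*}
Since $X=\mathrm{id}$ in the neighborhood, the summand $d(X^{t_i}(x_i),x_{i+1})=d(x_i,x_{i+1})$ vanishes on all but the ``jump'' indices $i=2^k-1$. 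In $\{-n,\dots,n\}$ there are at most $2\log_2 n+C$ such indices, each contributing at most $\xi/2$, so the average is bounded by $(2\log_2 n+C)\xi/(2n)\to 0$. Thus $(x_i,t_i)$ is a genuine asymptotic average pseudo-orbit of $X$.

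By hypothesis there exists an orientation preserving homeomorphism $h:\R\to\R$ with $h(0)=0$ and a point $z\in M$ such that
\begin{equation*}
\lim_{n\to\infty}\frac{1}{n}\sum_{i=0}^{n}\int_{i}^{i+1} d\bigl(X^{h(t)}(z),X^{t-i}(x_i)\bigr)\,dt=0.
\end{equation*}
Without loss of generality, $d(z,w_1)<\epsilon/2$; since $X=\mathrm{id}$ on the neighborhood, any orbit of $X$ stays within $\epsilon/2$ of $w_1$ on the relevant time scale, so $d(X^{h(t)}(z),w_2)>2\epsilon$ for all $t$. Hence on each block where $x_i=w_2$ the integrand is $>2\epsilon$, and those blocks $[2^{2j+1},2^{2(j+1)})$ have asymptotic density $2/3$ in $\{0,\dots,n\}$. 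Therefore the Cesàro limit above is bounded below by $(2/3)\cdot 2\epsilon > 0$, a contradiction. This concludes the proof.

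The main obstacle, and the only genuinely new piece compared with Lemma \ref{main2}, is the verification that the doubling-block construction has density of jumps tending to zero so that the stronger (asymptotic) pseudo-orbit condition is met; the rest is a verbatim adaptation, including the use of Theorem \ref{sing} to dispose of singularities and the good coordinates provided by (\ref{GC}) together with the Pasting Lemma to realize the identity return map after a $C^1$-small perturbation.
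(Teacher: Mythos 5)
Your proposal is correct and follows essentially the same route as the paper: reduce to the time-$2\pi$ map being the identity near the orbit via the linear coordinates, reuse the doubling-block sequence from Lemma~\ref{main2}, check it is an asymptotic average pseudo-orbit (the paper does this by invoking the bound (\ref{chato1000}), you by directly counting the $O(\log n)$ jump indices --- the same estimate), and derive a contradiction with the shadowing limit being zero. No substantive difference.
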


\begin{proof} The proof follows analogously to  Lemma \ref{main2}. Suppose  that
 there exists a $C^1$-stably asymptotic average shadowable incompressible flow $X^t$ having a  non-hyperbolic closed orbit $q$ of period $\pi$ and with trivial real
 spectrum, since, by Theorem~\ref{sing}, the vector field $X$ has no
 singularities.

Consider $\hat{X}^t$   a representation of $X^t$. Then, by
(\ref{chato5000}), $\hat{X}^{2\pi t} = id $ in a
$\xi$-neighborhood of $\vec{0}$. Recall that, since $\hat{X}^t$
has the asymptotic average shadowing property, $\hat{X}^{2\pi\,t}$
also has.

Take two points $w_1=(0,y_1,...,y_{d-1})$ and $w_2=-w_1$, with
$d(w_1,w_2)=\xi/2$ and take $\epsilon:=d(w_1,w_2)/ 3$.
Consider $(x_i,t_i)_{i\in\mathbb{Z}}$  the sequence defined in
(\ref{rrbp}). Observe that the  sequence is a asymptotic average
pseudo orbit. In fact, for $m \in \mathbb{N}$ large enough, the
inequality (\ref{chato1000}) implies that,
\begin{equation*}
    \lim_{m\to \infty}\frac{1}{m}\sum_{k=1}^m d(X^{t_k}(x_k),x_{k+1}) = 0.
\end{equation*}

Denote $\hat{X}^{2\pi}$ by $X^1$. So the asymptotic average pseudo
orbit $(x_i,t_i)_{i\in\mathbb{Z}}$ can be $\epsilon / 2$
positively asymptotic shadowed in average by  the orbit of ${X}$
through some point $ z \in M$, that is, there is an orientation
preserving homeomorphism $h\colon \mathbb{R}\rightarrow
\mathbb{R}$ with $h(0)=0$ such that
\begin{equation}\label{chato6000}
    \underset{n\rightarrow \infty}{\lim}\frac{1}{n}\sum_{i=1}^n \int_{i}^{{i+1}} d(X^{h(t)}(z),
    X^{t-i}(x_i))dt = 0.
\end{equation}
We can assume without loss of generality that
$d(z,w_1)<{\epsilon}/{2}$. This implies that for  $n$ sufficiently
big,
$$ \frac{1}{n}\sum_{i=1}^n \int_{i}^{{i+1}} d(X^{h(t)}(z),
    X^{t-i}(x_i))dt > {\epsilon}.$$

\noindent Therefore,

\begin{equation*}
    \underset{n\rightarrow \infty}{\lim}\frac{1}{n}\sum_{i=1}^n \int_{i}^{{i+1}} d(X^{h(t)}(z),
    X^{t-i}(x_i))dt > 0,
\end{equation*}

\noindent which contradicts (\ref{chato6000}).\end{proof}


\begin{subsection}{Rule out singularities}

In order to prove that there are no coexistence of singularities
and the $C^1$-stably (asympotic) average shadowable
incompressible flows we will recall some useful results.

\begin{lemma}\label{linear}\cite[Lemma 3.3]{BR3}
Let $\sigma$ be a singularity of $X \in \mathfrak{X}_\mu^1(M)$. For any $\epsilon >0$ there exists  $Y \in  \mathfrak{X}_\mu^{\infty}(M)$, such that $Y$ is $\epsilon$-$C^1$-close to $X$ and  $\sigma$ is a linear hyperbolic singularity of $Y$.
\end{lemma}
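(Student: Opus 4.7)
The target is a divergence-free $C^\infty$ field $Y$ whose singularity at $\sigma$ has a linear, hyperbolic germ. I would proceed in three steps, each carried out inside the volume-preserving category, which is the real subtlety; a dissipative version of this lemma is elementary, but here every interpolation has to respect $\nabla\cdot Y=0$.

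\emph{Step 1 (smoothing).} Approximate $X$ in the $C^1$ topology by a $C^\infty$ divergence-free field $X_1$ still vanishing at $\sigma$. Standard conservative smoothing (Zuppa's theorem) produces a $C^\infty$ divergence-free field that is $\epsilon/3$-$C^1$-close to $X$; if the zero at $\sigma$ is lost, composing with a tiny volume-preserving translation moves the nearby singularity back to $\sigma$ while staying $C^1$-close.

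\emph{Step 2 (localization to the linear part).} Pass to Moser volume-preserving charts around $\sigma$, so the volume form becomes the standard Lebesgue form. Write $X_1(y)=Ay+R(y)$ with $A=DX_1(\sigma)$ trace-free (since $X_1$ is divergence-free) and $R(y)=O(\|y\|^2)$. Choose a radius $r>0$ small enough that $R$ has $C^1$-norm at most $\epsilon/3$ on $B(\sigma,2r)$ and invoke the Pasting Lemma from \cite{AM} to glue, conservatively and smoothly, the linear field $y\mapsto Ay$ on $B(\sigma,r)$ to $X_1$ outside $B(\sigma,2r)$. The outcome $X_2$ is $C^\infty$, divergence-free, coincides with $X_1$ away from $\sigma$, is purely linear on $B(\sigma,r)$, and is $\epsilon/3$-$C^1$-close to $X_1$.

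\emph{Step 3 (hyperbolic linear perturbation).} Inside the space of trace-free $d\times d$ matrices, the hyperbolic ones form an open and dense subset (the condition ``no eigenvalue on the imaginary axis'' is open and dense and is compatible with $\operatorname{trace}=0$ in every dimension $d\geq 2$), so there is a hyperbolic trace-free $A'$ with $\|A'-A\|$ as small as we want. A final application of the Pasting Lemma, this time inside $B(\sigma,r)$, replaces $Ay$ by $A'y$ on $B(\sigma,r/2)$ while keeping $X_2$ unchanged outside $B(\sigma,r)$. The resulting $Y\in\mathfrak{X}_\mu^\infty(M)$ is $\epsilon$-$C^1$-close to $X$ and has $\sigma$ as a linear hyperbolic singularity, which is the statement. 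The main obstacle throughout is exactly what forces the use of \cite{AM}: a naive bump-function interpolation between two divergence-free fields is generally not divergence-free, and the Arbieto--Matheus pasting lemma is what lets us perform the cut-off conservatively, at the price of carefully tuning $r$ and $\|A'-A\|$ so that the support of the correction is thin enough to keep its $C^1$-size below $\epsilon$.
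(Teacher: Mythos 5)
The paper offers no proof of this lemma; it is quoted verbatim from \cite[Lemma 3.3]{BR3}, and your three-step scheme (Zuppa-type conservative smoothing, linearization at $\sigma$ in Moser charts via the Arbieto--Matheus pasting lemma, then a traceless hyperbolic perturbation of the linear part pasted back in) is essentially the argument of that cited source, including the correct identification of where the volume-preserving constraint bites. Two points deserve tightening, though. First, in Step 1 you write that if the zero at $\sigma$ is lost one ``moves the nearby singularity back to $\sigma$'': a $C^1$-small smooth approximation $X_1$ of $X$ need not have \emph{any} singularity near $\sigma$ (if $DX(\sigma)$ is degenerate the zero can simply disappear), so there may be nothing to move. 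The correct repair is to note that $\|X_1(\sigma)\|<\delta$ and to subtract, inside a Moser chart and on a ball of radius fixed \emph{in advance}, the constant (hence divergence-free) field $X_1(\sigma)$, regluing with the pasting lemma; the resulting $C^1$-error is $O(\delta)$ with the annulus fixed, so it can be absorbed by choosing $\delta$ small. Second, your parenthetical claim that hyperbolicity is open and dense among traceless matrices ``in every dimension $d\geq 2$'' is false for $d=2$: a traceless $2\times 2$ matrix with positive determinant has purely imaginary eigenvalues, and this is an open condition, so elliptic linear parts cannot be perturbed away there. Density does hold for $d\geq 3$ (one redistributes small nonzero real parts among the eigenvalues subject to zero sum), which is the regime of the paper, so the lemma as used is unaffected; but the side remark as stated is wrong and should be restricted to $d\geq 3$.
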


The second one, was proved in ~\cite[Proposition 4.1]{V} generalizing the Doering theorem in \cite{D}. Observe that, in our volume-preserving context, the singularities of hyperbolic type are all saddles.

\begin{proposition}\label{Vivier}
If $~Y \in \Cmundo$ admits a linear hyperbolic singularity of
saddle-type, then the linear Poincar\'e flow of $Y$ does not admit
any dominated splitting  over $M\setminus Sing(Y)$.
\end{proposition}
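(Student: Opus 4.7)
The plan is to argue by contradiction, assuming that $Y\in\Cmundo$ has a linear hyperbolic saddle singularity $\sigma$ and that, nonetheless, the linear Poincar\'e flow $P_Y^t$ admits a dominated splitting $N=N^1\oplus N^2$ on $M\setminus\mbox{Sing}(Y)$ with domination constant $\lambda\in(0,1)$. Linear hyperbolicity of $\sigma$ allows us to identify $Y$ on a neighborhood $U$ of $\sigma$ with the linear field $y\mapsto Ay$, where $A=DY(\sigma)$. Incompressibility gives $\operatorname{tr}(A)=0$, and the saddle hypothesis forces the spectrum of $A$ to contain eigenvalues of both positive and negative real part; write $T_\sigma M=E^s\oplus E^u$ for the associated hyperbolic splitting.

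The first step would be an explicit computation along an invariant eigenaxis. If $e$ is an eigenvector of $A$ with eigenvalue $\mu$, then for small $r>0$ the orbit $y(t)=re^{\mu t}e$ stays on $\R e$, its normal space is constantly $e^\perp$, and $P_Y^t$ along this orbit equals $\Pi_{e^\perp}\circ e^{tA}|_{e^\perp}$; passing to the quotient by the flow direction, this has Lyapunov spectrum $\{\operatorname{Re}\mu_i:\mu_i\in\operatorname{Spec}(A)\setminus\{\mu\}\}$. Invariance of $N^1\oplus N^2$ under $P_Y^t$ combined with $\lambda$-domination then compels each $N^i(y)$ along the $e$-axis to be a direct sum of (generalized) eigenspace components of $A$ in $e^\perp$, with every real part appearing in $N^2$ strictly below every real part appearing in $N^1$.

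The second step extracts a contradiction from the saddle assumption by comparing two approach directions. Pick $e_s\in E^s$ and $e_u\in E^u$ with $\operatorname{Re}\mu_s<0<\operatorname{Re}\mu_u$. By the first step, along the $e_s$-axis the eigendirection $\R e_u$ (having the largest surviving real part) is forced to lie in $N^1$, while along the $e_u$-axis the eigendirection $\R e_s$ (now the smallest surviving real part) is forced to lie in $N^2$. Let $y_n^s,y_n^u\to\sigma$ along these two axes; extract subsequential Grassmannian limits of $N^1(y_n^s),N^2(y_n^s)$ and of $N^1(y_n^u),N^2(y_n^u)$. The limit splittings live on the hyperplanes $e_s^\perp,e_u^\perp\subset T_\sigma M$, are invariant under $e^{tA}$, and inherit $\lambda$-domination. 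Joining the two axes by an arc in the connected set $U\setminus\{\sigma\}$ and invoking continuity of $N^1\oplus N^2$, one finds that a single continuous rank-$(d-1)$ dominated splitting cannot simultaneously place $\R e_u$ into $N^1$ near the $e_s$-axis and $\R e_s$ into $N^2$ near the $e_u$-axis without, at some intermediate point, violating either the $A$-invariance of the summands or the domination inequality.

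The main obstacle I anticipate is making this last step rigorous: the Grassmannian-limit / continuity argument is the technical heart, and it will require care with complex eigenvalues and Jordan blocks of $A$, with the reparametrisation needed because $\|Y(y)\|\to 0$ as $y\to\sigma$ (so the ``Poincar\'e time'' degenerates), and with the combinatorics of the possible dominated splittings in dimension $d\geq 4$ where intermediate eigendirections offer more options for $N^1,N^2$. This is precisely where Vivier \cite{V} extends Doering's three-dimensional argument \cite{D}, and the natural realisation of the plan should follow that line.
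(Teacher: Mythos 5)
First, note that the paper itself offers no proof of this proposition: it is quoted directly from Vivier \cite[Proposition 4.1]{V}, which generalises Doering's three--dimensional theorem \cite{D}, so your attempt has to stand on its own. Your first step is fine (along an invariant eigenaxis $\R e$ the linear Poincar\'e flow is the constant cocycle induced by $e^{tA}$ on $\R^d/\R e$, and uniform domination forces the splitting there to respect the generalized eigenspaces with the real parts of $N^2$ strictly below those of $N^1$). The genuine gap is in your second step, and it is not just a matter of "technical care": the two conclusions you propose to play off against each other are not in conflict. Putting $\R e_u$ (largest surviving real part) into the dominating bundle $N^1$ along the stable axis, and $\R e_s$ (smallest surviving real part) into the dominated bundle $N^2$ along the unstable axis, is exactly what any eigenspace--respecting splitting does. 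Concretely, for $d=3$ with spectrum $\mu_1<\mu_2<0<\mu_3$, the rule ``$N^1=$ direction of largest surviving real part, $N^2=$ direction of smallest'' defines on \emph{every} eigenaxis an invariant, uniformly dominated splitting, so the separatrix data alone never contradict each other; and at an intermediate point of an arc joining the two axes the orbit leaves the linearizing neighbourhood, so $A$--invariance imposes no constraint there and your continuity argument has nothing to grab onto. (The index count $\dim N^1$, $\dim N^2$ is likewise consistent across the separatrices; it would only fail if the splitting were required to separate positive from negative exponents, i.e.\ for \emph{hyperbolicity} of the linear Poincar\'e flow, which is Doering's easier statement, not domination.)

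The actual obstruction, and the content of Vivier's proof, comes from the regular orbits that pass \emph{near} $\sigma$ without hitting it. Such an orbit enters along $E^s$ and leaves along $E^u$; its velocity $X(X^t(x))$ first decays at a stable rate and then grows at an unstable rate, and the flow direction (the line one quotients by to form $N$) sweeps through an arc of directions in $E^s\oplus E^u$ joining $v_s$ to $v_u$. Taking limits of the dominated splitting over longer and longer such passages produces a dominated splitting of the \emph{extended} linear Poincar\'e flow over that arc in the projective fibre above $\sigma$, and it is there --- because the exponent carried by the quotiented flow direction changes sign along the arc while dominated splittings of the constant cocycle $e^{tA}$ are rigidly tied to gaps in the real parts --- that the incompatibility finally appears. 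Your sketch never introduces these near--passage orbits or the compactification of the flow direction over the singularity, so the contradiction you announce is not available from the ingredients you have assembled; repairing the argument essentially amounts to reproducing the projective--bundle analysis of \cite{V}.
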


Finally, since by Poincar\'e recurrence, any $X \in \mathfrak{X}_\mu^1(M)$ is chain transitive, the following result is a direct consequence of \cite{Be}.

\begin{proposition}\label{Mixing}
In $\mathfrak{X}_\mu^1(M)$ chain transitive flows are equal
topologically mixing flows in a $C^1$-residual subset.
\end{proposition}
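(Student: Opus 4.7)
The plan splits into two short steps, matching the hint in the text.

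First I would verify the hypothesis that every $X \in \mathfrak{X}_\mu^1(M)$ is chain transitive. The Liouville measure $\mu$ is $X^t$-invariant and has full support, so Poincar\'e's recurrence theorem implies that $\mu$-almost every point is recurrent; in particular the non-wandering set, and hence the chain recurrent set, coincides with $M$. The connectedness of $M$ then upgrades chain recurrence to chain transitivity: given $x,y\in M$ and $\epsilon>0$, I would cover a continuous path from $x$ to $y$ by a finite family of $\epsilon/2$-balls centered at recurrent points $p_0=x, p_1,\dots,p_k=y$; within each ball one uses recurrence of $p_j$ to produce a piece of orbit that starts near $p_j$ and returns close enough to $p_j$ so that jumping of size less than $\epsilon$ to the base of the next ball $p_{j+1}$ yields a valid $\epsilon$-pseudo-orbit. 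Concatenating these pieces gives a finite $\epsilon$-pseudo-orbit from $x$ to $y$.

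Second, once chain transitivity is automatic throughout $\mathfrak{X}_\mu^1(M)$, I would apply directly the result of \cite{Be}, which asserts that on the space of chain transitive conservative flows topological mixing is a $C^1$-generic property. Since by the first step the ambient space $\mathfrak{X}_\mu^1(M)$ itself is composed exclusively of chain transitive flows, the residual set of mixing flows provided by \cite{Be} becomes a $C^1$-residual subset of the whole space $\mathfrak{X}_\mu^1(M)$, which is exactly the content of the proposition.

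The delicate point I expect to be the main obstacle is ensuring that chain transitivity (rather than mere chain recurrence) really holds on all of $M$; the full-support invariant measure combined with connectedness handles it, but the pseudo-orbit construction in the first step has to be done carefully, in particular to guarantee that the times $t_i$ used in the chain satisfy $t_i\geq 1$ as required by the convention for pseudo-orbits adopted in \S\ref{SP}. Beyond this, the proposition is a direct quotation of \cite{Be}.
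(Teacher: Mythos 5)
Your proposal matches the paper exactly: the paper gives no separate proof, stating only that by Poincar\'e recurrence every $X\in\mathfrak{X}_\mu^1(M)$ is chain transitive and that the proposition is then a direct consequence of \cite{Be}. Your two steps are precisely this argument, merely with the chain-transitivity verification (full non-wandering set plus connectedness, with due care about the $t_i\geq 1$ convention) written out in detail.
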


The following theorem is proved using an analog reasoning as the one in
\cite[Theorem 15]{AR}.

\begin{theorem}\label{sing}
If $X^t$ is a  $C^1$-stably (asymptotic) average shadowable
incompressible flow, then $X^t$ has no singularities.
\end{theorem}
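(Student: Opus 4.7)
The strategy is to argue by contradiction: assume $X^t$ is $C^1$-stably (asymptotic) average shadowable and admits a singularity $\sigma$, and derive a violation of the obstruction to dominated splittings at a hyperbolic saddle provided by Proposition~\ref{Vivier}. The scheme follows the one of \cite[Theorem 15]{AR} adapted to the conservative context.

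First I would apply Lemma~\ref{linear} to replace $X$ by an arbitrarily $C^1$-close $Y\in\mathfrak{X}_\mu^{\infty}(M)$ for which $\sigma$ persists as a linear hyperbolic singularity. Because $Y$ preserves volume, the eigenvalues of $DY_\sigma$ are symmetric with respect to the imaginary axis, so this singularity is automatically of saddle type. Since $C^1$-stable (asymptotic) average shadowability is by definition a $C^1$-open condition, $Y^t$ itself is $C^1$-stably (asymptotic) average shadowable, and the remainder of the argument is carried out inside a $C^1$-neighbourhood $\mathcal{U}$ of $Y$ in which every incompressible vector field enjoys the corresponding shadowing property.

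Next I would combine Lemmas~\ref{main2} and~\ref{socorro500} with Proposition~\ref{BGV3}. By the two lemmas, no vector field in $\mathcal{U}$ admits a closed orbit with trivial real spectrum. Therefore alternative (ii) of Proposition~\ref{BGV3} is ruled out, because the perturbation it produces lives inside $\mathcal{U}$. Consequently every closed orbit of $Y$ of period larger than the uniform constant $\pi_0$ carries an $\ell$-dominated splitting for the linear Poincar\'e flow $P_Y^t$, with $\ell$ independent of the orbit. I would then spread this splitting to the full regular set $\mathcal{R}=M\setminus\mathrm{Sing}(Y)$ as follows: by Poincar\'e recurrence the set of recurrent points has full $\mu$-measure in $\mathcal{R}$, by a conservative ergodic closing lemma (in the spirit of Ma\~n\'e, realised in the divergence-free category via the Pasting Lemma of \cite{AM}) $\mu$-almost every recurrent point is accumulated by closed orbits of nearby flows $Y'\in\mathcal{U}$ with arbitrarily large period, and Proposition~\ref{Mixing} together with chain transitivity ensures these approximating orbits populate a dense subset of $\mathcal{R}$. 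The uniformity of the constant $\ell$ lets me pass to the limit to obtain an $\ell$-dominated splitting on a dense invariant subset of $\mathcal{R}$, which extends by continuity (a standard property of dominated splittings, with the same constants) to the whole of $\mathcal{R}$. This contradicts Proposition~\ref{Vivier}, since $Y$ possesses a linear hyperbolic saddle singularity; the contradiction forces $\mathrm{Sing}(X)=\emptyset$.

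\textbf{Main obstacle.} The delicate step is the propagation of the $\ell$-dominated splitting from the long closed orbits to the full regular set while staying inside the neighbourhood $\mathcal{U}$ and preserving the volume. The argument requires a carefully controlled conservative closing/ergodic-closing lemma, so that the approximating periodic orbits used to spread the splitting are produced by perturbations small enough to remain in $\mathcal{U}$ and divergence-free, and uniform enough that the constants $(\pi_0,\ell)$ survive the limit. Ensuring this compatibility between the shadowing neighbourhood, the conservative perturbations, and the uniformity of the domination constant is where the technical core of the argument lies.
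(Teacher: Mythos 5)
Your proposal is correct in outline and shares the paper's skeleton: assume a singularity, linearize it into a hyperbolic saddle via Lemma~\ref{linear}, use Lemmas~\ref{main2}/\ref{socorro500} to exclude alternative (ii) of Proposition~\ref{BGV3} and thus obtain a uniformly $\ell$-dominated splitting over long closed orbits of nearby flows, propagate that splitting to $M\setminus\mathrm{Sing}(Y)$, and contradict Proposition~\ref{Vivier}. Where you diverge is in the propagation step, which you correctly identify as the technical core. The paper does not invoke any ergodic closing lemma: it applies Proposition~\ref{Mixing} to find topologically mixing flows $Z_n$ arbitrarily $C^1$-close to $Y$, then closes a dense orbit to produce flows $W_n$ with a \emph{single} periodic orbit whose Hausdorff distance to $M$ is less than $1/n$; the uniform $\ell$-domination over these orbits then passes directly to the limit on $M\setminus\mathrm{Sing}(Y)$. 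Your route---Poincar\'e recurrence plus a conservative ergodic-closing-lemma argument to accumulate periodic orbits of perturbed flows on a dense subset of the regular set---reaches the same conclusion, but it is heavier than necessary: the ergodic closing lemma for conservative flows is a delicate tool not quoted in the paper, and in fact the plain Pugh--Robinson closing lemma (or the general density theorem) would already suffice for your purposes; the Pasting Lemma by itself does not "realise" an ergodic closing lemma, it only makes perturbations divergence-free. Two further minor points: volume preservation gives $\mathrm{tr}\,DY_\sigma=0$ rather than eigenvalue symmetry about the imaginary axis (symmetry is the Hamiltonian/symplectic phenomenon), though trace zero plus hyperbolicity still forces saddle type, which is all you need; and when extending the splitting "by continuity" one should note that an $\ell$-dominated splitting extends to the closure of its domain \emph{intersected with the regular set}, since the linear Poincar\'e flow is undefined at $\sigma$. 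Neither point affects the validity of the argument.
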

\begin{proof}
Let $X\in\mathfrak{X}_\mu^1(M)$ be  with  $X^t$ $C^1$-stably
(asymptotic) average shadowable. Fix a small $C^1$ neighborhood
$\mathcal{U}\subset \mathfrak{X}_\mu^1(M)$ of $X$. The proof is by
contradiction. Assume that $Sing(X)\not= \emptyset$. Using
Lemma~\ref{linear}, there exists $Y\in \mathcal{U}$ with a linear
saddle-type singularity $\sigma\in Sing(Y)$. By
Proposition~\ref{Mixing}, there exist $Z_n\in
\mathfrak{X}_\mu^1(M)$ $C^1$-close to $Y$ which is topologically
mixing. We can find $W_n\in \mathfrak{X}_\mu^1(M)$ $C^1$-close to
$Z_n$ having a $W_n$-closed orbit $\mathcal{O}(p_n)$ such that the
Hausdorff distance between $M$ and $\cup_t
W_n^t(\mathcal{O}(p_n))$ is less than $1/n$.

Now we consider jointly Proposition~\ref{BGV3} and  Lemma~\ref{main2}
(Lemma~\ref{socorro500}) and obtain that $P_{W_n}^t$ is
$\ell$-dominated over the $W_n$-orbit of $\mathcal{O}(p_n)$ where
$\ell$ is uniform on $n$. Since $W_n$ converges in the $C^1$-sense
to $Y$ and $\lim \sup_n \cup_t W_n^t(\mathcal{O}(p_n))=M$ we
obtain that $M\setminus Sing(Y)$ has an $\ell$-dominated splitting
which contradicts Proposition~\ref{Vivier}.
\end{proof}

\end{subsection}


The proof of Theorem A  similarly follows the proof of the Theorem
\ref{sing}.

\begin{proof}(of Theorem A)

Let $X\in \Cmundo$ be a vector field with $X^t$ $C^1$-stably
average shadowable ($C^1$-stably asymptotic average shadowable)
flow and fix a small $C^1$ neighborhood $\mathcal{U}\subset
\mathfrak{X}_\mu^1(M)$ of $X$. By Proposition~\ref{Mixing}, there
exists $Z_n\in \mathfrak{X}_\mu^1(M)$ $C^1$-close to $X$ which is
topologically mixing. We can find $W_n\in \mathfrak{X}_\mu^1(M)$
$C^1$-close to $Z_n$ having a $W_n$-closed orbit
$\mathcal{O}(p_n)$ such that the Hausdorff distance between $M$
and $\cup_t W_n^t(\mathcal{O}(p_n))$ is less than $1/n$.

Now, we consider  together Proposition ~\ref{BGV3}, Lemma ~\ref{main2}
(Lemma~\ref{socorro500}) obtaining that $P_{W_n}^t$ is
$\ell$-do\-mi\-nated over the $W_n$-orbit of $\mathcal{O}(p_n)$
where $\ell$ is uniform on $n$. Since $W_n$ converges in the
$C^1$-sense to $X$ and
$$\lim \sup_n \bigcup_t
W_n^t(\mathcal{O}(p_n))=M$$ we obtain that $M\setminus Sing(X)$
has an $\ell$-dominated splitting. Now, if $X$ has a singularity
$\sigma$, then using Lemma~\ref{linear} there exists $Y\in
\Cmundo$ $C^1$-close to $X$ with a linear saddle-type singularity,
and proceeding as above $P^t_Y$ admits a $\ell$-dominated
splitting over $M \backslash Sing(Y)$, which contradicts
Proposition~\ref{Vivier}. Therefore, $M$ admits an
$\ell$-dominated splitting.\end{proof}
\end{subsection}


\begin{subsection}{Limit shadowing property - Proof of Theorem A$^\prime$}\label{quesaco21}

We
recall that by the stable manifold theory (cf.~\cite{ss39}), if
$\mathcal{O}$ is a hyperbolic closed orbit of $X$
 with splitting
 $T_{\mathcal{O}}M=E_{\mathcal{O}}^s \oplus E^X_{\mathcal{O}}\oplus
 E_{\mathcal{O}}^u$
then its \emph{unstable set}
$$ W^u(\mathcal{O})=\{ y\in M; \alpha(y)=\mathcal{O}\},$$
 \noindent is an immersed submanifold tangent at
 $\mathcal{O}$ to the subbundle  $E^X_{\mathcal{O}}\oplus
 E_{\mathcal{O}}^u$, and its \emph{stable set}
$$ W^s(\mathcal{O})=\{ y\in M; \omega(y)=\mathcal{O}\},$$
\noindent is an immersed submanifold tangent at
 $\mathcal{O}$ to the subbundle $E_{\mathcal{O}}^s\oplus
 E^X_{\mathcal{O}}$. In this case
  $W^{s}(\mathcal{O})$ and $W^{u}(\mathcal{O})$ are called the
  \emph{stable} and  the \emph{unstable} \emph{manifolds} of
  $\mathcal{O}$, respectively.

We observe that  the following lemma stated for dissipative flows holds in this setting of incompressible flows.

\begin{lemma}\cite[Lemma 23]{RRBP}\label{rr30B} If $X$ has the
limit shadowing property then $W^s(\mathcal{O}) \cap
W^u(\mathcal{O}') \neq \emptyset$ for any pair of orbits
$\mathcal{O}, \mathcal{O}' \in \Crit(X)$.
\end{lemma}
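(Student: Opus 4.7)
The plan is to realize the desired intersection point as the seed of an orbit produced by limit shadowing a hand-crafted pseudo-orbit that travels from $\mathcal{O}'$ in the past to $\mathcal{O}$ in the future. The construction is almost trivial because the limit-pseudo-orbit condition only requires the gap sizes $d(X^{t_i}(x_i),x_{i+1})$ to tend to $0$ as $|i|\to\infty$, not to be small for every $i$. This means a single finite ``splice'' between the two orbits is admissible.

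Fix $p\in\mathcal{O}'$ and $q\in\mathcal{O}$ and define $(x_i,t_i)_{i\in\mathbb{Z}}$ by
\[
x_i=X^{i}(p),\ t_i=1\quad(i\le 0),\qquad x_i=X^{i-1}(q),\ t_i=1\quad(i\ge 1).
\]
Then $X^{t_i}(x_i)=x_{i+1}$ for every $i\ne 0$, while at $i=0$ the gap is the fixed number $d(X^{1}(p),q)$. Hence $d(X^{t_i}(x_i),x_{i+1})\to 0$ as $|i|\to\infty$, so $(x_i,t_i)$ is a limit-pseudo orbit of $X$. Applying the limit shadowing property, choose $z\in M$ and an orientation-preserving homeomorphism $h\colon\mathbb{R}\to\mathbb{R}$ with $h(0)=0$ such that the orbit of $z$ positively \emph{and} negatively shadows $(x_i,t_i)$ in limit. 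Since $t_i\equiv 1$, one has $s_i=i$ for all $i\in\mathbb{Z}$.

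For $i\ge 1$, $X^{t-i}(x_i)=X^{t-1}(q)\in\mathcal{O}$, so the limit-shadowing equation (\ref{quesaco11}) forces $d(X^{h(t)}(z),\mathcal{O})\to 0$ as $t\to\infty$; as $h$ is an orientation-preserving homeomorphism of $\mathbb{R}$ we also get $d(X^{s}(z),\mathcal{O})\to 0$ as $s\to+\infty$. Because $\mathcal{O}$ is a critical orbit (periodic orbit or singularity), it is minimal, so $\omega(z)=\mathcal{O}$ and consequently $z\in W^{s}(\mathcal{O})$. The symmetric argument with $i\le 0$, using the negative shadowing with $X^{t-i}(x_i)=X^{t-1}(p)\in\mathcal{O}'$, yields $\alpha(z)=\mathcal{O}'$ and hence $z\in W^{u}(\mathcal{O}')$. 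This produces the desired point $z\in W^{s}(\mathcal{O})\cap W^{u}(\mathcal{O}')$.

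The only delicate point in turning this sketch into a full proof is the passage from the integral decay \eqref{quesaco11} (with the reparametrization $h$) to pointwise decay $d(X^{s}(z),\mathcal{O})\to 0$. One checks this by a standard uniform-continuity argument: if the mean value of the continuous function $t\mapsto d(X^{h(t)}(z),X^{t-i}(x_i))$ on $[i,i{+}1]$ tends to $0$, then along a sequence $t_i\in[i,i{+}1]$ the values themselves tend to $0$, and equicontinuity of $X^{\cdot}$ on a compact neighbourhood of $\mathcal{O}$ combined with the monotonicity of $h$ upgrades this to convergence on the whole half-line. This is the one step where a small amount of care is required; the rest is the straightforward pseudo-orbit construction above.
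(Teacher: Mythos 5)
The paper does not actually prove this lemma — it is imported from \cite[Lemma 23]{RRBP} with the remark that it carries over to the incompressible setting — so there is no in-paper argument to match. Your pseudo-orbit construction (backward orbit of $p\in\mathcal{O}'$ spliced at $i=0$ to the forward orbit of $q\in\mathcal{O}$, all $t_i=1$) is the natural and surely intended one, and your verification that it is a limit-pseudo-orbit is correct: all gaps vanish except the single one at $i=0$, so the decay condition as $|i|\to\infty$ holds trivially.

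The gap is in the last step, precisely the one you flag as ``delicate''. From $\int_{i}^{i+1} d(X^{h(t)}(z),X^{t-1}(q))\,dt\to 0$ one can only extract times $\tau_i\in[i,i+1]$ with $d(X^{h(\tau_i)}(z),\mathcal{O})\to 0$; this gives $\mathcal{O}\subseteq\omega(z)$, not $\omega(z)=\mathcal{O}$, and the definitions of $W^s(\mathcal{O})$ and $W^u(\mathcal{O}')$ used in the paper require the equalities $\omega(z)=\mathcal{O}$ and $\alpha(z)=\mathcal{O}'$ (these equalities are also what the dimension-counting applications in Lemmas \ref{quesaco00} and \ref{quesaco22} actually need). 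Your proposed repair — equicontinuity of the flow plus monotonicity of $h$ — does not close this: $h$ is an arbitrary orientation-preserving homeomorphism, so the real-time lengths $h(i+1)-h(i)$ may be unbounded, and a $t$-set of measure tending to $0$ inside $[i,i+1]$ (hence invisible in the integrals (\ref{quesaco11})) can be stretched by $h$ onto real-time intervals of arbitrarily large length, during which the orbit of $z$, moving at bounded speed, can leave any fixed neighbourhood of $\mathcal{O}$ and come back. This is exactly the phenomenon the authors warn about in \S\ref{SP} when discussing reparametrizations. To finish one needs an additional ingredient — for instance hyperbolicity of the critical orbits (which holds in every application of the lemma in this paper) combined with a local stable/unstable manifold argument, or some a priori restriction on $h$ — none of which is supplied by the uniform-continuity sketch.
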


We start recalling that for vector fields in $\Cmundo$,
whose dimension $d$ of $M$ is greater than or equal to 3, it is
proved in  \cite{Roro}  the existence of a residual subset of
vector fields such that every singularity and periodic orbit is
hyperbolic (or elliptic, if $d$ = 3), and the corresponding
invariant manifolds intersect transversely.  We have also a  kind
of Franks' lemma for conservative flows \cite[Lemma 3.2]{BR3}.

\begin{lemma} \label{quesaco00} If $X^t$  is a incompressible flow which is $C^1$-stably limit shadowable, then
all periodic orbits of $X$ are hyperbolic.
\end{lemma}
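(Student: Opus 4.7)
The plan is to argue by contradiction, adapting to the conservative setting the dimensional-obstruction strategy of \cite{L2,RRBP} for dissipative flows. Suppose that $X \in \Cmundo$ is $C^1$-stably limit shadowable and admits a non-hyperbolic periodic orbit $p$ of period $\pi$. Fix a $C^1$-neighborhood $\mathcal{U} \subset \Cmundo$ of $X$ such that every $Y \in \mathcal{U}$ has the limit shadowing property. The goal will be to produce $W \in \mathcal{U}$ carrying two hyperbolic critical orbits whose stable and unstable manifolds cannot intersect for dimensional reasons, contradicting Lemma \ref{rr30B}.

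The first step would be to apply Proposition \ref{BGV3} to (a suitably long iterate of) $p$: either its orbit admits an $\ell$-dominated splitting, or an arbitrarily small $C^1$-perturbation $Y \in \mathcal{U}$ carries a periodic orbit $q$ whose linear Poincar\'e flow has all eigenvalues $\pm 1$. Starting from such a trivial-real-spectrum orbit, the conservative Franks' lemma \cite[Lemma 3.2]{BR3} permits us to move reciprocal pairs of eigenvalues $\{\lambda,1/\lambda\}$ freely across the unit circle while respecting the volume-preserving constraint. Performing two such local perturbations in disjoint neighborhoods of two distinct periodic orbits (provided by an ergodic closing lemma applied to $Y$), I would obtain $Z \in \mathcal{U}$ carrying two hyperbolic periodic orbits $\mathcal{O}_1,\mathcal{O}_2$ with stable indices $k_1<k_2$ chosen so that
$$\dim W^s(\mathcal{O}_1) + \dim W^u(\mathcal{O}_2) \;=\; (k_1+1)+(d-k_2) \;<\; d.$$

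Next, the Kupka--Smale theorem for incompressible flows \cite{Roro} would give an arbitrarily small further perturbation $W \in \mathcal{U}$ in which every critical orbit is hyperbolic (or elliptic, which can occur only in dimension three) and the invariant manifolds of distinct hyperbolic closed orbits meet transversely. Hyperbolic continuation preserves $\mathcal{O}_1,\mathcal{O}_2$ as hyperbolic orbits $\mathcal{O}_1^W,\mathcal{O}_2^W$ with unchanged indices; the dimensional inequality together with transversality then forces $W^s(\mathcal{O}_1^W)\cap W^u(\mathcal{O}_2^W)=\emptyset$. Since $W\in\mathcal{U}$ still has the limit shadowing property, Lemma \ref{rr30B} demands the opposite, and the contradiction follows.

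The main obstacle is the second step, the simultaneous production in the \emph{same} vector field $Z$ of two hyperbolic orbits with \emph{different} indices: the conservative Franks' lemma is a purely local device, so the two spectral modifications must be executed in disjoint tubular neighborhoods of two distinct periodic orbits, and care is needed to ensure that the second perturbation does not destroy the hyperbolicity acquired in the first. A distinct difficulty appears when $d=3$, where the area-preserving $2{\times}2$ transversal Poincar\'e map only admits index one, so the dimensional obstruction above collapses. In that case I would instead invoke \cite{BD} to perturb the non-hyperbolic orbit to an elliptic orbit $\mathcal{O}_e$ and observe that, since no other orbit converges to $\mathcal{O}_e$, the set $W^s(\mathcal{O}_e)$ collapses to $\mathcal{O}_e$ itself, so it cannot intersect the unstable manifold of any other hyperbolic critical orbit of $W$, once again contradicting Lemma \ref{rr30B}.
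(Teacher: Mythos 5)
Your overall strategy for the higher-dimensional case --- produce two hyperbolic periodic orbits of different indices, invoke Kupka--Smale, and contradict Lemma~\ref{rr30B} --- is essentially the paper's, but two steps as written do not go through.

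First, the dimension count. With stable indices $k_1<k_2$ you need $\dim W^s(\mathcal{O}_1)+\dim W^u(\mathcal{O}_2)=(k_1+1)+(d-k_2)=d+1-(k_2-k_1)<d$, i.e.\ $k_2\geq k_1+2$. For an incompressible flow the linear Poincar\'e map over a closed orbit has determinant one, so hyperbolic closed orbits have index between $1$ and $d-2$; when $d=4$ only indices $1$ and $2$ occur and your strict inequality is unattainable. The argument must therefore also handle the equality case $k_2=k_1+1$, where the invariant manifolds have complementary dimensions: a transverse intersection would be zero-dimensional, yet any intersection point off the closed orbits carries its whole one-dimensional flow orbit inside the intersection, so transversality still forces $W^s(\mathcal{O}_1)\cap W^u(\mathcal{O}_2)=\emptyset$. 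This is exactly the computation the paper runs (compare the proof of Lemma~\ref{quesaco22}); without it your contradiction is not established in general. Relatedly, your detour through Proposition~\ref{BGV3} leaves the first alternative (an $\ell$-dominated splitting along the orbit of $p$) unaddressed --- a dominated periodic orbit can still be non-hyperbolic; the paper avoids this by applying the conservative Franks' lemma \cite[Lemma 3.2]{BR3} directly to the non-hyperbolic orbit to prescribe its index.

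Second, and more seriously, the three-dimensional case. Your claim that for an elliptic orbit $\mathcal{O}_e$ ``no other orbit converges to $\mathcal{O}_e$'', hence $W^s(\mathcal{O}_e)=\mathcal{O}_e$, is unjustified in the $C^1$ category: ellipticity only says the transversal eigenvalues are non-real of modulus one, and without a twist/KAM mechanism (unavailable for merely $C^1$ vector fields) nothing prevents orbits from accumulating on $\mathcal{O}_e$. The paper's route is to use \cite{BD} to create the elliptic orbit, then Franks' lemma to make the transversal return map a rational rotation, then \cite[Lemma 4.3]{AM} to obtain a tubular neighborhood on which some power of the Poincar\'e map is the identity, and finally Plamenevskaya's criterion \cite{Pl} to conclude that such dynamics is incompatible with limit shadowing. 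If you carried out the same local flattening, your ``$W^s(\mathcal{O}_e)$ collapses'' claim would become true and your contradiction with Lemma~\ref{rr30B} would work; as stated, this step is a gap.
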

\begin{proof}
\textbf{Case 1:} $\dim(M)= 3$. Let $\mathcal{U}$ be a
neighborhood of $X$ as definition of robustness of the limit
shadowing property. Let $p$ be a periodic point of $X$, and
suppose $\mathcal{O}$($p$) is non-hyperbolic. Using the \cite[Theorem 1.3]{BD} there exists $Y$ in
$\mathcal{U}$ with a elliptic periodic orbit $\mathcal{O}$($p_1$). Now, using Franks' lemma (\cite[Lemma 3.2]{BR3}) we can assume that for a perturbation in $\mathcal{U}$ the linear Poincar\'e flow $P$ on the period of the analytic continuation of $p_1$ is a rational rotation. Hence, the argument in  (\cite[Lemma 4.3]{AM}) allows us to obtain a tubular neighborhood of the periodic orbit. Clearly, there exists $n\in\mathbb{N}$ such that $P^n=id$ but this, due to a criteria in ~\cite{Pl}, invalidates the possibility of having the limit shadowing property (recall that if $P^n$ has the limit shadowing property, then $P$ also has).\\
\textbf{Case 2:} $\dim(M)\geq 3$.
Let $\mathcal{U}$ be a neighborhood of $X$ such that the limit
shadowing property holds. Since by the general density theorem (\cite{PR}), $C^1$-generically, we have plenty of periodic points we let $p$ and $q$ be periodic points of
$X$. We claim that any periodic point is hyperbolic. We assume by contradiction that $\mathcal{O}(p)$ (or $\mathcal{O}(q)$ of both) is a non-hyperbolic periodic
orbit of $X$. By \cite[Lemma 3.2]{BR3}, there exists $Y$
$C^1$-close to $X$ in $\mathcal{U}$ with $\mathcal{O}(p_1)$ and $\mathcal{O}(q_1)$
hyperbolic periodic orbits of $Y$ with different index (dimension of the stable manifold). Since we can consider $Y$ a
Kupka-Smale vector field (\cite{Roro}), we have that
$W^s(\mathcal{O}(p_1))\cap W^u(\mathcal{O}(q_1) )= \emptyset$ or $W^u(\mathcal{O}(p_1))\cap W^s(\mathcal{O}(q_1) )= \emptyset$. This
contradicts  Lemma \ref{rr30B} for incompressible flows, and prove
the desired.
\end{proof}

Now, we obtain  the following
result.

\begin{lemma} \label{quesaco22} An incompressible flow $X^t$ which is $C^1$-stably limit shadowable
has no singularities.
\end{lemma}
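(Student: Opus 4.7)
The plan is to follow the proof of Theorem~\ref{sing} almost verbatim, with Lemma~\ref{quesaco00} taking the role played there by Lemma~\ref{main2} (and Lemma~\ref{socorro500}). I argue by contradiction: suppose $X\in\mathfrak{X}^1_\mu(M)$ is $C^1$-stably limit shadowable and that $\text{Sing}(X)\neq\emptyset$. Fix a $C^1$-neighborhood $\mathcal{U}\subset\mathfrak{X}^1_\mu(M)$ of $X$ on which the limit shadowing property holds, shrinking it if necessary so that every $Y\in\mathcal{U}$ is itself $C^1$-stably limit shadowable (which is automatic by openness of $\mathcal{U}$).

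First I would apply Lemma~\ref{linear} to produce $Y\in\mathcal{U}$ carrying a linear saddle-type singularity $\sigma$. Then Proposition~\ref{Mixing} supplies $Z_n\in\mathcal{U}$ arbitrarily $C^1$-close to $Y$ which is topologically mixing, and a standard closing-type perturbation yields $W_n\in\mathcal{U}$ possessing a closed orbit $\mathcal{O}(p_n)$ (of arbitrarily large period) whose orbit closure is $(1/n)$-Hausdorff close to $M$, exactly as in the proof of Theorem~\ref{sing}.

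The decisive step is to combine Proposition~\ref{BGV3} with Lemma~\ref{quesaco00}. Applied to $W_n$ along $\mathcal{O}(p_n)$, Proposition~\ref{BGV3} gives two alternatives: either an $\ell$-dominated splitting for $P^t_{W_n}$ along $\mathcal{O}(p_n)$, or, after a further small $C^1$-perturbation (which can be arranged to stay inside $\mathcal{U}$), a closed orbit whose linear Poincar\'e return has all eigenvalues equal to $\pm 1$. The latter orbit is in particular non-hyperbolic, which Lemma~\ref{quesaco00} forbids for any element of $\mathcal{U}$. Hence the first alternative must hold and $P^t_{W_n}$ admits an $\ell$-dominated splitting over $\mathcal{O}(p_n)$ with $\ell$ independent of $n$.

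Finally I would pass to the limit: since $W_n\to Y$ in the $C^1$-topology and $\bigcup_t W_n^t(\mathcal{O}(p_n))$ becomes dense in $M$, the uniform $\ell$-domination transfers to an $\ell$-dominated splitting for $P^t_Y$ over $M\setminus\text{Sing}(Y)$, contradicting Proposition~\ref{Vivier} applied to the linear saddle $\sigma$. The main technical point I expect to handle is the book-keeping of perturbation sizes so that each auxiliary vector field ($Y$, $Z_n$, $W_n$, and the hypothetical one produced by alternative (ii) of Proposition~\ref{BGV3}) remains inside $\mathcal{U}$; this is exactly the issue confronted in Theorem~\ref{sing}, and no essentially new difficulty should arise.
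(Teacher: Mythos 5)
Your proof is correct, but it follows a genuinely different route from the one the paper takes for this lemma. You transplant the machinery of Theorem~\ref{sing}: linearize a singularity via Lemma~\ref{linear}, use Proposition~\ref{Mixing} plus a closing perturbation to produce nearby flows $W_n$ with closed orbits Hausdorff-dense in $M$, exclude alternative (ii) of Proposition~\ref{BGV3} by invoking Lemma~\ref{quesaco00} (legitimate, since an orbit with all eigenvalues $\pm1$ is in particular non-hyperbolic, and every flow in a small enough $\mathcal{U}$ is itself $C^1$-stably limit shadowable), pass the uniform $\ell$-domination to the limit over $M\setminus \mathrm{Sing}(Y)$, and contradict Proposition~\ref{Vivier}. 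The paper instead argues much more directly: it perturbs to a Kupka--Smale flow $Y$ having a hyperbolic saddle singularity $\sigma$ and a hyperbolic periodic orbit $\gamma$ of different indices; Lemma~\ref{rr30B} (the heteroclinic consequence of limit shadowing) forces $W^u(\sigma)\cap W^s(\gamma)\neq\emptyset$ (or the symmetric intersection), and since the full flow orbit of an intersection point lies in both invariant manifolds, a dimension count shows the intersection cannot be transverse, contradicting Kupka--Smale. The paper's argument is shorter, avoids the dominated-splitting apparatus entirely, and exploits a feature specific to limit shadowing (Lemma~\ref{rr30B}) that has no analogue in the average-shadowing setting; your argument is heavier but runs in parallel with the proof of Theorem~\ref{sing} and actually delivers more than the statement asks for, namely that a global dominated splitting off the singularities would have to exist. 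The only points you should make explicit if you keep your route are the ones the paper also glosses over in Theorem~\ref{sing}: that the periods $\pi(p_n)$ eventually exceed the threshold $\pi_0$ of Proposition~\ref{BGV3}, and that $\ell$-dominated splittings are closed under $C^1$ limits of flows and Hausdorff limits of orbits.
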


\begin{proof}
Suppose that $X$ has a
singularity $\sigma_0$, then by \cite{Roro,PR} there exists
$Y\in \mathfrak{X}_\mu^1(M)$ $C^1$-close to $X$ with a hyperbolic singularity ${\sigma}$
and a hyperbolic periodic orbit ${\gamma}$ of different indices
\emph{i} and \emph{j}, respectively.\\
If j $<$ i, then
$$\dim W^u(\sigma) + \dim W^s(\gamma)=(\dim M-i)+j \leq
\dim \emph{M}.$$
Since we can consider $Y$ a Kupka-Smale (Robinson) vector field we
have $\dim W^u(\sigma)+ \dim W^s(\gamma) =  \dim M$. By
Lemma \ref{rr30B}, we can consider $x \in W^u (\sigma)\cap
W^s(\gamma)$. Then $\mathcal{O}$(\emph{x})$ ~\subset W^u
(\sigma)\cap W^s(\gamma)$ and we can split
$$ T_x(W^u(\sigma)) = T_x (\mathcal{O}(x)) \oplus E^1  \;\
\textmd{and} \;\  T_x(W^s(\gamma)) = T_x (\mathcal{O}(x)) \oplus
E^2 .$$ So, $\dim ( T_x(W^u(\sigma))+ T_x (W^s(\gamma))) <
\dim W^u (\sigma) + \dim W^s(\gamma) =
\dim M$.  This is a contradition, because \emph{X} is a
Kupka-Smale vector field.\\
If  \emph{j} $\geq$ \emph{i}, then $\dim W^s(\sigma) + \dim
W^u(\gamma) \leq \dim M$ and by the same arguments we have a
contradiction. Thus $X$ has no singularities.\end{proof}

We recall that an incompreensible flow is said to be an
\emph{incompreensible star flow} if there exists a
$C^1$-neighborhood $\mathcal{U}$ of $X$ in $\Cmundo$ such that any
critical orbit of any  $Y \in \mathcal{U}$ is hyperbolic. A
consequence of Lemmas \ref{quesaco00} and  \ref{quesaco22} is the
following result.

\begin{corollary}\label{quesaco20} If $X \in \mathfrak{X}_\mu^1(M)$ is $C^1$-stably limit shadowing
shadowable then $X^t$ is a star flow without singularities.
\end{corollary}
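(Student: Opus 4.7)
The plan is to deduce this corollary purely by combining Lemmas \ref{quesaco00} and \ref{quesaco22}, exploiting the fact that $C^1$-stable limit shadowability is an open condition in $\mathfrak{X}^1_\mu(M)$. Recall that a star flow (without singularities) asks for a whole $C^1$-neighborhood of $X$ in which every critical orbit of every flow is hyperbolic; so I need both (a) $X$ itself has no singularities, and (b) there is a $C^1$-neighborhood $\mathcal{U}$ of $X$ such that every $Y\in\mathcal{U}$ has only hyperbolic periodic orbits and no singularities.

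First I would unpack the hypothesis: by definition of $C^1$-stable limit shadowability there is an \emph{open} $C^1$-neighborhood $\mathcal{U}\subset\mathfrak{X}^1_\mu(M)$ of $X$ such that every $Y\in\mathcal{U}$ has the limit shadowing property. The key elementary observation is that any $Y\in\mathcal{U}$ then inherits the same robustness: since $\mathcal{U}$ is open, $Y$ has its own $C^1$-neighborhood $\mathcal{V}_Y\subset\mathcal{U}$ on which every flow has the limit shadowing property, so $Y$ is itself $C^1$-stably limit shadowable. Thus Lemmas \ref{quesaco00} and \ref{quesaco22} apply not only to $X$ but to every $Y\in\mathcal{U}$.

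Next I would apply those lemmas uniformly on $\mathcal{U}$. Lemma \ref{quesaco22}, applied to each $Y\in\mathcal{U}$ (including $X=Y$), gives $\mathrm{Sing}(Y)=\emptyset$; in particular $\mathrm{Sing}(X)=\emptyset$, which settles the ``without singularities'' part. Lemma \ref{quesaco00}, applied to each $Y\in\mathcal{U}$, gives that every periodic orbit of $Y$ is hyperbolic. Combining these two facts, for every $Y\in\mathcal{U}$ every critical orbit of $Y$ (there are no singular ones, and the periodic ones are hyperbolic) is hyperbolic, which is exactly the definition of incompressible star flow provided in the paragraph preceding the statement. This concludes the argument.

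There is essentially no hard step: the only point worth making explicit is the passage from ``$X$ is stably limit shadowable'' to ``every $Y$ in some $C^1$-neighborhood is stably limit shadowable,'' which is the reason we may invoke both lemmas throughout $\mathcal{U}$ rather than just at $X$. Everything else is a direct quotation of Lemmas \ref{quesaco00} and \ref{quesaco22}.
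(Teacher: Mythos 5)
Your proposal is correct and matches the paper's intent exactly: the paper simply asserts the corollary as a consequence of Lemmas \ref{quesaco00} and \ref{quesaco22}, and your explicit observation that stable limit shadowability is open (so each $Y$ in the neighbourhood is itself stably limit shadowable, allowing both lemmas to be applied throughout $\mathcal{U}$) is precisely the glue the paper leaves implicit.
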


\begin{theorem}\label{sono02} \cite[Theorem 1]{CF} If $X\in \Cmundo$ is a star flow
without singularities, then $X^t$ is a transitive Anosov flow.
\end{theorem}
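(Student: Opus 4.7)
The plan is to adapt the classical Liao--Mañé strategy for star flows to the volume-preserving, singularity-free setting. The starting point is that the star condition gives uniform $C^1$-robustness of hyperbolicity of the critical elements, while the absence of singularities means that one only has to understand the linear Poincaré flow $P_X^t$ on regular orbits, which by Poincaré recurrence are dense in $M$.

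First I would establish \emph{uniform hyperbolicity} of the periodic orbits of any $Y$ in a $C^1$-neighbourhood $\mathcal{U}\subset\mathfrak{X}_\mu^1(M)$ of $X$: there are constants $C>0$, $\lambda\in(0,1)$ and $T>0$ such that along every periodic orbit of every $Y\in\mathcal{U}$ the stable/unstable subbundles of $P_Y^t$ contract/expand at the rate $C\lambda^t$ for $t\geq T$. The star condition rules out non-hyperbolic periodic orbits in $\mathcal{U}$ to begin with; the conservative Franks' Lemma (Lemma~3.2 of \cite{BR3}) then promotes this pointwise hyperbolicity to a uniform one, since otherwise a small $C^1$-volume-preserving perturbation could realize an eigenvalue of $P_Y^\pi$ of norm one, contradicting the star property.

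Next, from uniform hyperbolicity of the periodic orbits I would apply a Liao-type sifting argument to deduce that $P_X^t$ admits a dominated splitting $N = N^s \oplus N^u$ over $\overline{\Per(X)}$ with uniform domination constant. The conservative $C^1$-closing lemma (Pugh--Robinson--Arnaud), combined with Poincaré recurrence, yields $\overline{\Per(X)} = M$, so the splitting extends to all of $M\setminus\mathrm{Sing}(X)=M$. Finally I would upgrade the dominated splitting to hyperbolic: the uniform exponential bounds on the periodic orbits are inherited by the bundles $N^s$, $N^u$ over the closure, and Liouville's formula (applied to the area form induced on $N$ by $\omega$ and the flow direction) forces the rates on $N^s$ and $N^u$ to be genuinely contracting and expanding; hence $X^t$ is Anosov. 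Transitivity then follows because an incompressible Anosov flow preserves a smooth measure of full support and is therefore ergodic, as recalled in \S\ref{CI}.

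The main obstacle is the second step, i.e., deducing a dominated splitting for the \emph{linear Poincaré flow} on $\overline{\Per(X)}$ from the uniform hyperbolicity of periodic orbits. One must control angles between the stable and unstable directions along arbitrarily long periodic orbits of possibly different indices, work modulo the flow direction with reparametrizations, and ensure that each intermediate perturbation produced by the closing lemma and by Franks' lemma remains inside the star-neighbourhood~$\mathcal{U}$; these are precisely the delicate ingredients of the conservative Liao--Mañé theory that have to be transferred from the diffeomorphism setting to flows without singularities.
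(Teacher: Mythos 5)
A point of orientation first: the paper does not prove this statement at all --- it is imported verbatim from Ferreira \cite{CF} (see also Gan--Wen \cite{SG} for the dissipative nonsingular case) and used as a black box to conclude Theorem~A$^\prime$. So there is no internal proof to compare yours against; what can be judged is whether your sketch would itself constitute a proof. Your overall strategy is the standard one and essentially the one followed in \cite{CF,SG}: the conservative Franks' lemma forces uniform hyperbolicity of the periodic orbits of every field in the star neighbourhood, a Liao--Ma\~n\'e argument promotes this to a uniformly hyperbolic splitting of the linear Poincar\'e flow over $\overline{\Per(X)}$, the conservative closing lemma together with Poincar\'e recurrence gives $\overline{\Per(X)}=\Omega(X)=M$, and one concludes that $X^t$ is Anosov.

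As written, however, the proposal has two genuine gaps. The first you name yourself: the Liao sifting/selecting step --- uniform angles between stable and unstable directions along arbitrarily long periodic orbits of a priori different indices, the passage from periodic data to the closure, and keeping every auxiliary perturbation inside $\mathcal{U}$ --- is precisely the content of the theorem, and ``has to be transferred'' is not an argument. In particular, your proposed shortcut for upgrading domination to hyperbolicity via Liouville's formula only works when the normal bundle splits into one-dimensional subbundles, i.e.\ for $d=3$: in higher dimension volume preservation controls the product $\det P_X^t|_{N^s}\cdot\det P_X^t|_{N^u}$ up to the bounded factor $\|X(x)\|/\|X(X^t(x))\|$, but determinant contraction of a bundle of dimension at least two does not yield norm contraction, so the uniform exponential estimates must come from Liao's estimates along periodic orbits rather than from the volume form. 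The second gap is the last sentence: ergodicity of volume-preserving Anosov flows requires $C^{1+\alpha}$ (indeed the Hopf argument needs absolute continuity of the invariant foliations) and is not available for a merely $C^1$ vector field. Transitivity should instead be obtained topologically: $\Omega(X)=M$ by Poincar\'e recurrence and full support of the volume, the spectral decomposition writes $\Omega(X)$ as a finite disjoint union of transitive basic sets which are open in $\Omega(X)$, and connectedness of $M$ forces a single basic set.
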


The proof of Theorem A$^\prime$ follows directly by Corollary
\ref{quesaco20} and Theorem \ref{sono02}.

\end{subsection}


\section{Hamiltonian flows-proof of Theorems~ B and B$^\prime$}

Observe that, in the Hamiltonian context, we only consider regular energy surfaces, thus we do not have to deal with singularities.  Now, we
state the following Hamiltonian version of \cite[Corollary 2.22]{BGV} and Proposition~\ref{BGV3} and was proved in \cite[Theorem 3.4]{BRT}:

\begin{theorem}\label{main}
Let $H\in C^2(M,\mathbb{R})$ and $\mathcal{U}$ be a neighborhood
of $H$ in the $C^2$-topology. Then for any $\epsilon>0$ there are
$\ell,\pi_0\in\mathbb{N}$ such that, for any $H_0\in \mathcal{U}$
and for any periodic point $p$ of period $\pi(p)\geq \pi_0$:
\begin{enumerate}
\item either $\Phi^t_{H_0}(p)$ admits an $\ell$-partially hyperbolic splitting along the orbit of $p$;
\item or, for any tubular flowbox neighborhood $\mathcal{T}$ of the orbit of $p$, there exists an $\epsilon$-$C^2$-perturbation $H_1$ coinciding with $H_0$ outside $\mathcal{T}$ and whose transversal linear Poincar\'e flow $\Phi^{\pi(p)}_{H_1}(p)$ has all eigenvalues with modulus equal to $1$.
\end{enumerate}
\end{theorem}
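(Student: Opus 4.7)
The plan is to mimic the strategy of Proposition~\ref{BGV3} but in the symplectic/Hamiltonian category. The natural object is the transversal linear Poincar\'e flow $\Phi^t_{H_0}$ restricted to the normal bundle $\mathcal{N}$ along the orbit of $p$, which is a symplectic cocycle of dimension $2d-2$. The engine of the proof is the Bonatti--Gourmelon--Vivier (BGV) dichotomy for linear cocycles along a periodic orbit: either $\Phi^t_{H_0}(p)$ already admits a uniform $\ell$-dominated splitting along $\mathcal{O}(p)$, or one can produce an arbitrarily small perturbation of the cocycle whose period map has all eigenvalues of modulus $1$. The integer $\pi_0$ is dictated by the quantitative BGV lemma: the period has to be long enough so that the absence of $\ell$-domination forces enough hyperbolicity-cancellation on consecutive blocks that a $C^0$-small perturbation of the cocycle can collapse the spectrum onto the unit circle. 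The constants $\ell$ and $\pi_0$ can be taken uniform on $\mathcal{U}$ by compactness and continuity of $H\mapsto \Phi^t_H$ in the $C^2$-topology.

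Next, I would realize the abstract perturbation of the symplectic cocycle as a genuine $C^2$-small perturbation of the Hamiltonian supported in the tubular flowbox $\mathcal{T}$. The tool here is a Hamiltonian Franks-type lemma along periodic orbits (an analogue in the symplectic category of \cite[Lemma 3.2]{BR3}): given a periodic point $p$ of $H_0$ and a small symplectic perturbation of $\Phi^{\pi(p)}_{H_0}(p)$ inside $\mathrm{Sp}(2d-2,\mathbb{R})$, there is an $\varepsilon$-$C^2$-perturbation $H_1$ realizing this cocycle. To pass from a local realization in Darboux coordinates (cf. \cite{MZ}) to a globally defined $H_1$ coinciding with $H_0$ outside $\mathcal{T}$, I would invoke the Hamiltonian pasting lemma, controlling the $C^2$-norm so that $H_1\in\mathcal{U}$. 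This is where I expect the main obstacle to be: one must simultaneously respect symplecticity of the cocycle, the constraint that the energy level $\mathcal{E}_{H_0,e}$ is preserved (so the perturbation lives in a codimension-two slice), and the tubular flowbox geometry; the standard volume-preserving construction used in \cite{BR4} does not transfer verbatim.

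Finally, to conclude item (1) of the statement, it is not enough to obtain an $\ell$-dominated splitting on $\mathcal{N}$: the conclusion asserts $\ell$-\emph{partial hyperbolicity}. This last upgrade is supplied by the symplectic mechanism already invoked in Subsection~\ref{theend} and attributed to \cite{BV}: in a symplectic cocycle, the extreme subbundles of a dominated splitting are automatically uniformly hyperbolic, because the symplectic pairing forces the dominated and dominating rates to be reciprocal. Consequently the $\ell$-dominated splitting provided by the BGV step on $\mathcal{N}$ is in fact $\ell$-partially hyperbolic, and the alternative (1) of the theorem holds. Alternative (2) follows directly from the BGV step combined with the Hamiltonian Franks/pasting realization discussed above, with the eigenvalues of $\Phi^{\pi(p)}_{H_1}(p)$ pushed onto the unit circle.
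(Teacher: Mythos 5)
The paper does not actually prove this statement: it is quoted as \cite[Theorem 3.4]{BRT}, so there is no internal proof to compare against. Your outline --- the Bonatti--Gourmelon--Vivier dichotomy applied to the symplectic cocycle $\Phi^t_{H_0}$ along long periodic orbits, a Hamiltonian Franks-type realization (as in \cite{AD}) supported in the tubular flowbox, and the symplectic upgrade from an $\ell$-dominated splitting to $\ell$-partial hyperbolicity via the mechanism of \cite{BV} --- is precisely the strategy of that cited proof, so your reconstruction is faithful to the source and the obstacle you flag (realizing the cocycle perturbation while preserving the energy level) is indeed where the cited argument does its real work.
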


\begin{proof}(of Theorem B) By \cite{BFR2}, exist $H_n\in C^2(M,\mathbb{R})$
$C^2$-close to $H$ and $\tilde{e}$ arbitrarily close to $e$, such
that $\mathcal{E}_{H_n,\tilde{e}}$ is topologically mixing. We can
find $\hat H_n\in C^2(M,\mathbb{R})$ $C^2$-close to $H_n$ and
$\hat e$ close to $e$ having a $X_{\hat H_n}^t$-closed orbit $p_n$
such that the Hausdorff distance between $\mathcal{E}_{\hat
H_n,\hat{e}}$ and $\bigcup_t X_{\hat H_n}^t(p_n)$ is less than
$1/n$.

Then, we follow the steps of Lemma~\ref{main2}
(Lemma~\ref{socorro500}) but using the formalism developed in
\cite[Lemma 6.1]{BRT2} obtaining that $P_{\hat H_n}^t$ is
$\ell$-dominated over the $X_{\hat H_n}^t$-orbit of $p_n$ where
$\ell$ is uniform on $n$. Since $\hat H_n$ converges in the
$C^2$-sense to $H$ and
$$\lim \sup_n \bigcup_t
X_{\hat H_n}^t(p_n)=\mathcal{E}_{\hat H_n,\hat{e}}$$ we obtain that $\mathcal{E}_{\hat H_n,\hat{e}}$ has an
$\ell$-dominated splitting. Therefore, $\mathcal{E}_{\hat H_n,\hat{e}}$ admits an
$\ell$-dominated splitting. By \cite[Remark 2.1]{BRT2} $\mathcal{E}_{\hat H_n,\hat{e}}$ is partial hyperbolic. Finally, we observe that partial hyperbolicity spreads to the closure and we are over.
\end{proof}

\subsection{Limit shadowing property - Proof of Theorem B$^\prime$.}
 Now, we prove  Theorem
B$^\prime$. The proof of this theorem follows as in Theorem A$^\prime$. So, we
only mention the version for Hamiltonian systems  of results in
Subsection \ref{quesaco21}, and the modifications necessary to
obtain our result.

Firstly, note that Lemma \ref{rr30B} is true for Hamiltonian
systems. Now, we need to check the Lemmas \ref{quesaco00} and
\ref{quesaco22}
 in this context. For this we recall the notions
of Kupka-Smale Hamiltonian systems. A Hamiltonian system $(H,e,
\mathcal{E}_{H,e})$ is a \emph{Kupka-Smale Hamiltonian system} if
the union of the hyperbolic and $k$-elliptic closed orbits $( 1
\leq k \leq n-1)$ in $\mathcal{E}_{H,e}$ is dense in
$\mathcal{E}_{H,e}$ and the intersection of invariant of the
closed orbits intersect transversally. Furthermore, the Kupka-Smale Hamiltonian systems form a residual in $C^2(M,\mathbb{R})$.
See \cite[Theorem 1 and 2]{Roro}.

Now,  we  state Lemma \ref{quesaco00} for Hamiltonian systems.

\begin{lemma}\label{acabou01} If a Hamiltonian system $(H,e, \mathcal{E}_{H,e})$ is $C^2$-stably limit shadowable, then
all its  periodic orbits are hyperbolic.
\end{lemma}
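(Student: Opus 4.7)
The plan is to argue by contradiction along the lines of Case~1 of Lemma \ref{quesaco00}, but now using Hamiltonian perturbation tools. Let $\mathcal{V}$ be the $C^2$-neighbourhood of $(H,e,\mathcal{E}_{H,e})$ provided by the stable limit shadowing hypothesis, and suppose for contradiction that $X_H$ admits a non-hyperbolic closed orbit $\mathcal{O}(p)$ of period $\pi$ on $\mathcal{E}_{H,e}$. Since in the symplectic setting the eigenvalues of $\Phi^{\pi}_{H}(p)$ come in reciprocal pairs, non-hyperbolicity forces at least one such pair to lie on the unit circle. Combining Theorem \ref{main} (alternative (2)) with the Hamiltonian Franks-type perturbation of \cite[Lemma 6.1]{BRT2}, we may produce an arbitrarily $C^2$-small perturbation $\tilde H\in\mathcal{V}$ whose analytic continuation $\tilde p$ of $p$ satisfies the property that every eigenvalue of $\Phi^{\pi}_{\tilde H}(\tilde p)$ lies on the unit circle.

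A second, arbitrarily small symplectic perturbation---tuning the arguments of the eigenvalues to rational multiples of $2\pi$ while preserving the Hamiltonian structure via the pasting lemma of \cite{AM}---yields $\hat H\in\mathcal{V}$ for which some iterate $\Phi^{N\pi}_{\hat H}(\hat p) = \mathrm{id}$ on the transversal section $\mathcal{N}_{\hat p}$. Working in Darboux flowbox coordinates around $\mathcal{O}(\hat p)$ as in \cite{MZ}, the $N$-th iterate of the Poincar\'e first-return map on a transversal disk to $\mathcal{O}(\hat p)$ is therefore the identity on an open set. Applying the criterion of Pilyugin \cite{Pl}---which asserts that a system having an iterate equal to the identity on an open set cannot enjoy the limit shadowing property---to this return map, we obtain that $X_{\hat H}$ fails to be limit shadowable on any neighbourhood of $\mathcal{O}(\hat p)$, contradicting the hypothesis that $\hat H\in\mathcal{V}$.

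The hard part, and the only step genuinely specific to the Hamiltonian context, is the second one: passing from ``all eigenvalues on the unit circle'' to ``some iterate of $\Phi^{\pi}$ is the identity'' while remaining inside $\mathcal{V}$ and respecting the symplectic constraints. This requires a careful symplectic tuning of the arguments of the eigenvalues---note that these arguments come in $\pm\theta$ pairs by reciprocal symmetry, so the adjustment can be chosen to preserve the symplectic form---together with a spreading of the prescribed linear Poincar\'e data back to a Hamiltonian vector field via the pasting lemma of \cite{AM}. Once this fine-tuning is available, the remainder of the argument transcribes Case~1 of Lemma \ref{quesaco00} verbatim; fortunately, no dimension-counting via indices is needed, since in the symplectic setting all hyperbolic closed orbits share the same index and the higher-dimensional Case~2 argument does not apply here.
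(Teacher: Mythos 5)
Your argument for the fully elliptic case is essentially the paper's Case 1 (and the elliptic sub-case of its Case 2): perturb so that the transversal linear Poincar\'e map on the period becomes a rational rotation (the paper uses the Hamiltonian Franks lemma of \cite{AD} rather than the pasting lemma of \cite{AM}, but the spirit is the same), conclude that some iterate is the identity on a transversal disk, and invoke the criterion of \cite{Pl} to contradict limit shadowing. That part is fine.

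The genuine gap is in your first and last steps, where you claim that \emph{every} non-hyperbolic closed orbit can be $C^2$-perturbed so that all eigenvalues of $\Phi^{\pi}_{\tilde H}(\tilde p)$ lie on the unit circle, and that ``the higher-dimensional Case~2 argument does not apply here.'' Theorem \ref{main} does not give you this: its dichotomy says that alternative (2) is available only when alternative (1) fails (and only for periods $\geq \pi_0$). In dimension $2d\geq 6$ a non-hyperbolic orbit may be $k$-elliptic, i.e.\ partially hyperbolic with a dominated splitting separating an elliptic block from genuinely hyperbolic directions. For such an orbit alternative (1) holds, and no $C^2$-small Franks-type perturbation can push the strongly expanding/contracting eigenvalues onto the unit circle, so your rational-rotation construction never gets off the ground. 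This is exactly the case the paper's Case 2 is designed for: since every \emph{hyperbolic} orbit of a Hamiltonian has stable index $d-1$ (eigenvalues come in pairs $\lambda,\lambda^{-1}$) while a $k$-elliptic orbit has strictly smaller index, a Kupka--Smale perturbation forces $W^s(\mathcal{O}(p))\cap W^u(\mathcal{O}(q))=\emptyset$ for a hyperbolic orbit $q$ by dimension counting, contradicting Lemma \ref{rr30B}. Your observation that all hyperbolic orbits share the same index is true but is precisely the reason the index comparison with a non-hyperbolic partially hyperbolic orbit is available and needed; it does not make that case vacuous. To repair the proof you must split off the $k$-elliptic case and run the invariant-manifold argument via Lemma \ref{rr30B}, as the paper does.
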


\begin{proof}

\textbf{Case 1:} $\dim(M)= 4$. Let $\mathcal{U}$ be a
neighborhood of $H$ as definition of robustness of the limit
shadowing property. Let $p$ be a periodic point of $(H,e, \mathcal{E}_{H,e})$, and
suppose $\mathcal{O}$($p$) is non-hyperbolic. Using \cite{BeD} there exists $H_0$ in
$\mathcal{U}$ with an elliptic periodic orbit $\mathcal{O}$($p_1$). Now, using Franks' lemma for Hamiltonians (\cite{AD}) we can assume that for a perturbation in $\mathcal{U}$ the transversal linear Poincar\'e flow $P$ on the period of the analytic continuation of $p_1$ is a rational rotation. Hence, the argument similar to the one in  (\cite[Lemma 4.3]{AM}) allows us to obtain a tubular neighborhood of the periodic orbit. Clearly, there exists $n\in\mathbb{N}$ such that $P^n=id$ but this, due to a criteria in ~\cite{Pl}, invalidates the possibility of having the limit shadowing property.

\textbf{Case 2:} $\dim(M)=2d\geq 6$.
Let $\mathcal{U}$ be a neighborhood of $H$ such that the limit
shadowing property holds. Since by the general density theorem (\cite{PR}), $C^1$-generically, we have plenty of periodic points we let $p$ and $q$ be periodic points of
$(H,e, \mathcal{E}_{H,e})$. Clearly, we can assume that one of them, say $q$, is hyperbolic. We claim that $p$ must be also  hyperbolic. We assume by contradiction that $\mathcal{O}(p)$ is a non-hyperbolic periodic
orbit of $(H,e, \mathcal{E}_{H,e})$. Thus, generically $\mathcal{O}(p)$ is elliptic or $k$-elliptic (partial hyperbolic). If $p$ is partially hyperbolic, then $p$ and $q$ have different index. Since we can consider a
Kupka-Smale vector field (\cite{Roro}) $C^2$-near $H$, in $\mathcal{U}$ and still with the analytic continuations of $p$ and $q$ respectively, partial hyperbolic and hyperbolic, we have that
$W^s(\mathcal{O}(p))\cap W^u(\mathcal{O}(q) )= \emptyset$ and $W^u(\mathcal{O}(p_1))\cap W^s(\mathcal{O}(q_1) )= \emptyset$. This
contradicts  Lemma \ref{rr30B} for Hamiltonian flows, and prove
the desired. Finally, if $p$ is elliptic, then the argument in Case 1 but with several rational rotations in the symplectic subspaces allows us to obtain a contradiction.
\end{proof}

To complete the proof we recall the notion of
star Hamiltonian system. A  Hamiltonian systems $(H,e,
\mathcal{E}_{H,e})$ is a \emph{star Hamiltonian system} if there
exists a neighborhood $\mathcal{V}$ of $(H,e, \mathcal{E}_{H,e})$
such that, for any $(\widetilde{H}, \widetilde{e} ,
\mathcal{E}_{\widetilde{H},\widetilde{e}}) \in \mathcal{V}$, the
correspondent regular energy hypersurface
$\mathcal{E}_{\widetilde{H},\widetilde{e}}$ has all the critical
orbits hyperbolic.

As consequence of this results for Hamiltonin systems, we obtain
the following result.

\begin{corollary}If $(H,e, \mathcal{E}_{H,e})$ is a Hamiltonian system
 $C^2$-robustly  limit
shadowable, then $(H,e, \mathcal{E}_{H,e})$ is a Hamiltonian star
systems.
\end{corollary}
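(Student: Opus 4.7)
The plan is to mimic the route used in Corollary \ref{quesaco20} for incompressible flows, but simplified by the fact that in the Hamiltonian setting we work only with regular energy surfaces, so singularities are automatically excluded from $\mathcal{E}_{H,e}$ and there is no analogue of Lemma \ref{quesaco22} to prove. The goal is therefore just to promote Lemma \ref{acabou01} from a statement about the central system to a uniform statement on a whole neighborhood of $(H,e,\mathcal{E}_{H,e})$.

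First I would fix the $C^2$-neighborhood $\mathcal{V}$ witnessing the stable limit shadowing property of $(H,e,\mathcal{E}_{H,e})$. Since stable limit shadowing is by definition an open condition in the space $C^2(M,\mathbb{R})\times\mathbb{R}$ of Hamiltonian levels, every $(\widetilde{H},\widetilde{e},\mathcal{E}_{\widetilde{H},\widetilde{e}})$ sufficiently close to $(H,e,\mathcal{E}_{H,e})$ is itself stably limit shadowable, possibly after shrinking $\mathcal{V}$ to a smaller neighborhood $\mathcal{V}'$. Hence Lemma \ref{acabou01} applies to each member of $\mathcal{V}'$ and produces that all periodic orbits of each such system are hyperbolic.

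Second, I would invoke the openness of regularity of energy surfaces: by the analytic-continuation paragraph in the Hamiltonian formalism section, for $(\widetilde{H},\widetilde{e})$ close to $(H,e)$ the surface $\mathcal{E}_{\widetilde{H},\widetilde{e}}$ remains a regular connected component of $\widetilde{H}^{-1}(\{\widetilde{e}\})$. Shrinking $\mathcal{V}'$ once more if necessary, this guarantees $\operatorname{Sing}(X_{\widetilde{H}})\cap \mathcal{E}_{\widetilde{H},\widetilde{e}}=\emptyset$, so that $\operatorname{Crit}(\widetilde{H})\cap \mathcal{E}_{\widetilde{H},\widetilde{e}}$ consists exclusively of periodic orbits, which by the previous step are all hyperbolic. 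Unpacking the definition of star Hamiltonian system, this is precisely the conclusion desired.

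No serious obstacle is expected: the proof is essentially bookkeeping, with the two mildly delicate points being (a) checking that the hypothesis of Lemma \ref{acabou01} is inherited uniformly by nearby systems, which follows because $C^2$-stable limit shadowability is open by definition, and (b) checking that regularity of $\mathcal{E}_{H,e}$ is preserved under perturbation, which is the content of the analytic continuation statement already recorded in the introduction. Together these make the corollary a direct consequence of Lemma \ref{acabou01}.
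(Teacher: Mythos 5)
Your proposal is correct and follows exactly the route the paper intends: the corollary is stated there as an immediate consequence of Lemma~4.2 (the Hamiltonian analogue of the hyperbolicity-of-periodic-orbits lemma), using that $C^2$-stable limit shadowability is inherited by every system in the witnessing neighbourhood and that regular energy surfaces carry no singularities, so critical orbits reduce to periodic ones. The paper gives no written proof beyond ``as a consequence of these results,'' and your write-up supplies precisely the bookkeeping it leaves implicit.
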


In \cite[Theorem 1]{BRT} is was proved that a Hamiltonian star
system, defined on a $2d$-dimensional ($d\geq 2$) is Anosov. This
concluded the proof of Theorem B$^\prime$.

\section{Dissipative flows - proof of Theorem~C}\label{diff}


Firstly, we recall that $X^t$ is \emph{chain transitive}, if for any
points $ x, y \in M$ and any $\delta
> 0$, there exists a finite $\delta$-pseudo orbit
$(x_i,t_i)_{0\leq i \leq K}$ of $X$  such that $x_0=x$ and
$x_K=y$.  Observe that transitivity implies chain transitivity. We recall also that  a vector field $X$ has a \emph{property
$\mathfrak{P}$ robustly }if
 there  exists a $C^1$-neighborhood $\mathcal{U}$ of $X$
 such that any $Y\in \mathcal{U}$ has the property $\mathfrak{P}$. Theorem C is a direct consequence of the more general result:

\begin{theorem} \label{general1}If $X \in \Mundo$  is robustly chain transitive,
then $X^t$ admits a dominated splitting on $M$.
\end{theorem}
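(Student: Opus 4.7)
My plan is to adapt the scheme used in the proof of Theorem~A to the chain-transitive setting, with robust chain transitivity replacing stable average-shadowing. Fix a $C^1$-neighbourhood $\mathcal{U}$ of $X$ such that every $Y\in \mathcal{U}$ is chain transitive on $M$. The first observation is that chain transitivity on a compact manifold rules out sinks and sources: if $p$ is a sink, then for $\delta$ small enough every $\delta$-pseudo-orbit issuing from $p$ stays inside a fixed small neighbourhood of the basin (the jumps of size $\delta$ are absorbed by the contraction along the flow, yielding an a priori bound $d(x_i,p)\leq \delta/(1-e^{-\lambda})$), so $p$ cannot be chained to any point far from it; reversing time handles sources. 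Consequently, no $Y\in \mathcal{U}$ contains a sink or a source, either singular or periodic.

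Next, following the template of the proof of Theorem~A, I would combine Pugh's $C^1$-closing lemma with Hayashi/Wen--Xia type connecting-lemma perturbations to produce, for each $n$, a vector field $W_n\in \mathcal{U}$ carrying a single closed orbit $\mathcal{O}(p_n)$ whose closure lies within Hausdorff distance $1/n$ of $M$; the chain transitivity of nearby $Y\in\mathcal{U}$ supplies the long $\delta$-pseudo-orbits to which the connecting machinery can be applied. Invoking now the dissipative counterpart of Proposition~\ref{BGV3}, namely \cite[Corollary~2.22]{BGV}, I get the dichotomy: either $P_{W_n}^t$ admits an $\ell$-dominated splitting along $\mathcal{O}(p_n)$ with $\ell$ uniform in $n$, or else a further $C^1$-small perturbation $W_n'\in\mathcal{U}$ turns $\mathcal{O}(p_n)$ into a sink or a source of $W_n'$. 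The previous paragraph excludes the second alternative, so the $\ell$-dominated splitting holds along $\mathcal{O}(p_n)$ for every $n$; since $\mathcal{O}(p_n)$ Hausdorff-converges to $M$ and dominated splittings pass to Hausdorff limits, the splitting transfers to $M\setminus\operatorname{Sing}(X)$.

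Finally, to rule out singularities, if $X$ had one then a dissipative analogue of Lemma~\ref{linear} yields $Y\in\mathcal{U}$ with a hyperbolic singularity; the first observation forces it to be a saddle, and the Doering/Vivier obstruction (dissipative version of Proposition~\ref{Vivier}) then forbids a dominated splitting on $M\setminus\operatorname{Sing}(Y)$, contradicting what was just obtained. Hence $X$ is regular and the $\ell$-dominated splitting extends to all of $M$. The main obstacle I expect is the merging step: fusing a $1/n$-dense finite collection of generic periodic orbits into one single $W_n$-periodic orbit whose closure is $1/n$-dense requires careful iterative use of connecting-lemma type perturbations with control to remain inside $\mathcal{U}$; once this density of a single periodic orbit is secured, the remaining arguments are straightforward dissipative translations of those used in the proofs of Theorems~A and \ref{sing}.
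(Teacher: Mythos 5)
Your overall architecture matches the paper's: rule out sinks and sources via chain transitivity, produce nearby vector fields carrying periodic orbits that Hausdorff-approximate $M$, apply the dichotomy of \cite[Corollary 2.22]{BGV} to get an $\ell$-dominated splitting with $\ell$ uniform in $n$ (the sink/source alternative being excluded inside $\mathcal{U}$), and pass to the Hausdorff limit. However, the step you yourself flag as ``the main obstacle'' --- fusing a $1/n$-dense family of orbits into a single periodic orbit of some $W_n\in\mathcal{U}$ by iterated closing/connecting-lemma surgery --- is a genuine gap as written: you neither carry it out nor cite a result that does it, and doing it by hand is essentially as hard as proving a version of the connecting lemma for pseudo-orbits. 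The paper avoids this entirely by invoking Crovisier's theorem (Theorem~\ref{t.crovisier}, i.e.\ \cite[Theorem 4]{C}): on a residual subset $\mathcal{R}$ of $\Mundo$, a compact invariant set is a Hausdorff limit of periodic orbits \emph{if and only if} the flow is chain transitive on it. Since chain transitivity on all of $M$ holds robustly, one simply picks generic $Y_n\in\mathcal{R}\cap\mathcal{U}$ converging to $X$ and extracts, for each $Y_n$, periodic orbits Hausdorff-close to $M$; no surgery or merging is needed. So the missing idea is precisely this generic-approximation theorem, which packages the connecting-lemma technology you were trying to redeploy.

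Two smaller remarks. First, your preliminary observations (chain transitivity excludes sinks and sources; robust chain transitivity excludes singularities) are correct and are exactly what the paper imports from \cite[Lemma 6]{AR} and \cite[Theorem 15]{AR}, so that part is fine, though your singularity argument via a dissipative analogue of the Vivier/Doering obstruction is more elaborate than necessary once the citation is available. Second, when applying \cite[Corollary 2.22]{BGV} you should note that the second alternative of the dichotomy first produces a return map with all eigenvalues of equal modulus and only after a further small perturbation a sink or source; this extra perturbation must also be arranged to stay in $\mathcal{U}$, which is standard but worth stating.
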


 A compact
invariant set $\Lambda$ is \emph{attracting} if $\Lambda =
\bigcap_{t\geq0} X^t(U)$ for some neighborhood $U$ of $\Lambda$
satisfying, $X^t(U) \subset U$ for all $t>0$. An \emph{attractor}
of $X$ is a transitive attracting set of $X$ and a \emph{repeller}
is an attractor for $-X$. We say that $\Lambda$ is a \emph{proper}
attractor or repeller if $\emptyset\neq\Lambda\neq M$. A
\emph{sink} (\emph{source}) of $X$ is a attracting (repelling)
critical orbit of $X$.

We recall that  the chain-transitivity rules out the presence of
sinks and sources \cite[Lemma 6]{AR} and that robustly chain
transitive vector fields have no singularities \cite[Theorem
15]{AR}. We also recall that the \textit{Hausdorff distance}
between two compact subsets $A$ and $B$ of $M$ is defined by:
$$d_H(A,B)=\max\{\sup_{x\in A}d(x,B),\sup_{y\in B}d(y,A)\}.$$
We also make use of the following result.

\begin{theorem}\cite[Theorem 4]{C}
\label{t.crovisier} There exists a residual set $\mathcal{R}$ of
$\Mundo$ such that for any vector field $X\in \mathcal{R}$, a
compact invariant set $\Lambda$ is the limit, with respect to the Hausdorff
distance, of a sequence of periodic orbits if and only if $X$ is
chain transitive in $\Lambda$.
\end{theorem}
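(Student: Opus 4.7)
The plan is to prove both directions of the equivalence, with the forward direction being the substantive content that forces us into a residual set. The easy direction --- that any compact invariant set $\Lambda$ realised as a Hausdorff limit of periodic orbits $\mathcal{O}_n$ is automatically chain transitive --- I would verify for every $X\in\Mundo$ without any genericity hypothesis: given $\delta>0$ and $x,y\in\Lambda$, pick $n$ with $d_H(\mathcal{O}_n,\Lambda)<\delta/3$, choose $p,q\in\mathcal{O}_n$ within $\delta/3$ of $x,y$, and concatenate the forward-orbit segment of $\mathcal{O}_n$ from $p$ to $q$ (broken into unit-time chunks) with two small jumps at the endpoints to produce a $\delta$-pseudo-orbit from $x$ to $y$.

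For the nontrivial direction I would exploit the connecting-lemma machinery of Hayashi together with its pseudo-orbit refinement due to Bonatti--Crovisier. The key analytic input is that, after an arbitrarily small $C^1$-perturbation of $X$, any periodic $\delta$-pseudo-orbit can be realised as a genuine periodic orbit, with the perturbation supported in a prescribed neighbourhood of the pseudo-orbit. Given a chain-transitive compact invariant set $\Lambda$, I would first, for each $\eta>0$, extract a finite $\eta$-net $\{x_1,\dots,x_N\}$ of $\Lambda$ and use chain-transitivity to concatenate $\eta$-chains from $x_i$ to $x_{i+1}$ (indices modulo $N$) into a periodic $\eta$-pseudo-orbit that is $\eta$-dense in $\Lambda$; the connecting lemma would then produce, after a $C^1$-perturbation, a true periodic orbit within Hausdorff distance $O(\eta)$ of $\Lambda$.

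To convert this perturbation-existence statement into a generic one, I would run a standard Baire-category argument. Fix a countable base $(V_j)_{j\in\mathbb{N}}$ for the Hausdorff topology on the hyperspace $\mathcal{K}(M)$ of compact subsets of $M$, and for each $j,k\in\mathbb{N}$ let $\mathcal{U}_{j,k}\subset\Mundo$ consist of those $X$ such that every chain-transitive compact invariant set $\Lambda\in V_j$ of $X$ admits a periodic orbit within Hausdorff distance $1/k$. Openness of $\mathcal{U}_{j,k}$ would follow from the $C^1$-persistence of hyperbolic periodic orbits together with the upper-semicontinuity of chain-recurrence classes, while density is precisely the content of the connecting-lemma construction of the previous paragraph. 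Setting $\mathcal{R}=\bigcap_{j,k}\mathcal{U}_{j,k}$ then gives the required residual set.

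The main obstacle I expect is the density of $\mathcal{U}_{j,k}$: implementing the connecting lemma so that the resulting periodic orbit is Hausdorff-close to the \emph{entire} chain-transitive set --- rather than merely to a single long orbit segment --- requires careful localisation of the successive perturbations at the prescribed scale $1/k$ so that the approximations already achieved are not destroyed. This is exactly what Crovisier's topological-tower technology is engineered to provide, and constructing towers of the right geometry to cover an $\eta$-dense periodic pseudo-orbit constitutes the technical heart of the argument.
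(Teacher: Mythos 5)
The paper does not prove this statement at all: it is quoted verbatim from Crovisier \cite[Theorem 4]{C} and used as a black box, so there is no internal proof to compare against. Judged on its own merits, your sketch does identify the correct architecture of Crovisier's argument --- the unconditional easy direction, the connecting lemma for pseudo-orbits to create a periodic orbit Hausdorff-close to a chain-transitive set after a $C^1$-perturbation, and a Baire-category upgrade --- and you correctly flag the topological-tower construction as the technical heart. But that heart is only named, not supplied, and the Baire step as you have set it up contains a genuine gap.

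Concretely, the sets $\mathcal{U}_{j,k}$ you define are neither obviously open nor obviously dense. For openness: a periodic orbit of $X$ within $1/k$ of $\Lambda$ need not persist under perturbation unless it is hyperbolic, and, more seriously, the collection of chain-transitive compact invariant sets is not upper semicontinuous in $X$ --- a $C^1$-small perturbation can create new chain-transitive sets lying in $V_j$ that were approximated by nothing for $X$, so a nearby $Y$ can fail the defining condition even when $X$ satisfies it. For density: the connecting lemma produces, for a \emph{fixed} chain-transitive set of a \emph{fixed} field, one perturbation with one nearby periodic orbit, whereas membership in $\mathcal{U}_{j,k}$ requires the perturbed field to handle \emph{all} of its own chain-transitive sets in $V_j$ simultaneously. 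The standard repair (and what Crovisier actually does) is to work with the map $X\mapsto \overline{\mathrm{Per}(X)}$ viewed as a compact subset of the hyperspace $\mathcal{K}(M)$ of periodic orbits: on the Kupka--Smale residual set this map is lower semicontinuous, hence its continuity points form a residual set, and at a continuity point the perturbative statement transfers back to $X$ itself without any openness claim. You would also need to note that the connecting lemma for pseudo-orbits is itself only available under generic hypotheses, and that the flow version requires extra care near singularities; none of this is fatal to the strategy, but as written the proposal is an outline of Crovisier's theorem rather than a proof of it.
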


Finally, the dichotomy in \cite[Corollary 2.22]{BGV} will play an important role along our proof.

To prove Theorem C  is sufficient to prove Theorem
\ref{general1}, since vector fields with (asymptotic) average
shadowing property are chain transitive (cf. \cite{GSX3, GSX}).
The proof of Theorem \ref{general1} follows in a manner analogous
to \cite[Theorem 15]{AR}.

\begin{proof}(Proof of Theorem \ref{general1}) Let $X$ be a robustly chain transitive vector field, $\mathcal{U}$
the neighborhood of $X$ as in the definition, and $\mathcal{R}$
the residual in $\mathcal{U}$ of the Theorem \ref{t.crovisier}. Hence, there exist a sequence  $Y_n\in \mathcal{R}$, converging to
$X$,  and periodic orbits $O_{Y_n}(p_n)$ of $Y_n$ such that
$M=\limsup O_{Y_n}(p_n)$. As $Y_n$ neither admits sinks nor
sources, \cite[Corollary 2.22]{BGV} assure that the linear Poincar\'e
flow $P_{Y_n^t}^t$ admits an $\ell$-dominated splitting over
$O_{Y_n}(p_n)$, with $\ell$ independent of $n$. Therefore,
$P_{X^t}^t$ admits an $\ell$-dominated splitting over $M$.
\end{proof}



\section*{Acknowledgements}
The first author would like to thank Alexander Arbieto for suggestions given during the preparation of this work.


\end{document}